\documentclass[11pt]{article}
\usepackage[utf8]{inputenc}
\usepackage[T1]{fontenc}

\usepackage{enumerate}
\usepackage{url}
\usepackage{epsf}
\usepackage{epsfig}
\usepackage{graphicx}
\usepackage{subfig}
\usepackage{array}
\usepackage{verbatim}
\usepackage{soul}

\voffset=-2cm
\hoffset=-1.5cm
\textwidth=16cm
\textheight=23cm

\usepackage{amsmath,amsfonts,amssymb,amsthm}
\usepackage{wasysym}

\usepackage{stmaryrd}

\usepackage{ gensymb }
\usepackage{ esint }

\usepackage{hyperref}

\def\cA{{\cal A}}

\def\cC{{\cal C}}

\def\cE{{\cal E}}
\def\cF{{\cal F}}

\def\cH{{\cal H}}

\def\cO{{\cal O}}
\def\cP{{\cal P}}

\def\cS{{\cal S}}

\def\mD{{\mathbb D}}

\def\mH{{\mathbb H}}

\def\mM{{\mathbb M}}
\def\mN{{\mathbb N}}

\def\mP{{\mathbb P}}
\def\mQ{{\mathbb Q}}
\def\mR{{\mathbb R}}
\def\mS{{\mathbb S}}

\def\mZ{{\mathbb Z}}

\def\R{\mR}

\def\Z{\mZ}

\def\ve{\varepsilon}


\DeclareMathOperator\Id{Id}



\def\<{\langle}
\def\>{\rangle}


\DeclareMathOperator\diver{div}


\def\sm{\setminus}


\makeatletter
\newcommand{\oset}[2]{%
  {\mathop{#2}\limits^{\vbox to -.5\ex@{\kern-\tw@\ex@
   \hbox{\scriptsize #1}\vss}}}}
\makeatother

\renewcommand\subset{\subseteq}

\DeclareMathOperator\Hull{Hull}

\newtheorem{theorem}{Theorem}[section]
\newtheorem{remark}[theorem]{Remark}
\newtheorem{lemma}[theorem]{Lemma}
\newtheorem{proposition}[theorem]{Proposition}
\newtheorem{corollary}[theorem]{Corollary}
\newtheorem{definition}[theorem]{Definition}

\newtheorem*{proposition*}{Proposition}
\newtheorem*{theorem*}{Theorem}

\DeclareMathOperator{\bary}{bary}

\usepackage{algorithm,algpseudocode,algorithmicx}

\def\RZ2{\cF(\Z^2)}

\DeclareMathOperator\Leb{Leb}

\DeclareMathOperator\SDiff{SDiff}
\DeclareMathOperator\eval{e}
\DeclareMathOperator\Prob{Prob}
\DeclareMathOperator\BV{BV}

\author{
Quentin Mérigot\thanks{CNRS, Université Paris-Dauphine, UMR 7534, CEREMADE, Paris, France.}
\thanks{ANR grant TOMMI, ANR-11-BS01-014-01}
\and
Jean-Marie Mirebeau{\footnotemark[1]}~\thanks{
ANR grant NS-LBR, ANR-13-JS01-0003-01
}
}


\DeclareMathOperator\supp{supp}

\begin{document}
\title{
Minimal geodesics along volume preserving maps,\\
through semi-discrete optimal transport
}
\maketitle
\date{}

\begin{abstract}
We introduce a numerical method for extracting minimal geodesics along the group of volume preserving maps, equipped with the $L^2$ metric, which as observed by Arnold \cite{Arnold:1966tf} solve Euler's equations of inviscid incompressible fluids. The method relies on the generalized polar decomposition of Brenier \cite{Brenier:1991fz}, numerically implemented through semi-discrete optimal transport. 
It is robust enough to extract non-classical, multi-valued solutions of Euler's equations, for which the flow dimension is higher than the domain dimension, a striking and unavoidable consequence of this model \cite{Shnirelcprimeman:1994ef}. Our convergence results encompass this generalized model, and our numerical experiments illustrate it for the first time in two space dimensions.
\end{abstract}

\section{Introduction}

The motion of an inviscid incompressible fluid, moving in a compact domain $X \subset \mR^d$, is described by Euler's \cite{Anonymous:uz} equations 
\begin{align}
\label{eqdef:Euler}
	\partial_t v + (v\cdot \nabla ) v &= - \nabla p & \diver v &=0,
\end{align}
coupled with the impervious boundary condition $v \cdot n=0$ on $\partial \Omega$. Here $v$ denotes the fluid velocity, and $p$ the pressure acts as a Lagrange multiplier for the incompressibility constraint.
In Lagrangian coordinates, Euler equations \eqref{eqdef:Euler} yield the geodesic equations along the group $\SDiff$ volume preserving diffeomorphisms of $X$, equipped with the $L^2$ metric \cite{Arnold:1966tf}.
Consider an inviscid incompressible fluid flowing during the time interval $[0,1]$, and a map $s^* : X \to X$ giving the final position $s^*(x)$ of each fluid particle initially at position $x\in X$. 
In this paper, we 
discretize and 
numerically investigate a natural approach to reconstruct the  intermediate fluid states: look for a minimizing geodesic joining the initial configuration $s_*=\Id$ to the final one $s^*$
\begin{align}
\label{eqdef:MinimizeSDiff}
\text{minimize} &\int_0^1 \|\dot s(t)\|^2 dt, & \text{subject to } s(0)=s_*,\ s(1)=s^*,\text{ and } \forall t \in [0,1], \ s(t) \in \mS. 
\end{align}
We denoted by $\mS \subset L^2(X, \mR^d)$ the space of maps preserving the Lebesgue measure on $X$, which in dimension $d \geq 2$ is the closure of $\SDiff$. 
Despite this first relaxation, note that the optimized functional in \eqref{eqdef:MinimizeSDiff} does not penalize the spatial derivatives of $s$, whereas the constraint involves the jacobian of $s$. 
The study of \eqref{eqdef:MinimizeSDiff} thus requires non-standard variational techniques, reviewed in \cite{Figalli:2012tm}. 
 
In dimension $d \geq 3$, the optimization problem \eqref{eqdef:MinimizeSDiff} needs not have a minimizer in $s \in H^1([0,T], \mS)$  \cite{Shnirelcprimeman:1994ef}, and minimizing sequences $(s_n)_{n \in \mN}$ may instead display oscillations reminiscent of an homogeneization phenomenon. 
A second relaxation is required, based on generalized flows \cite{Brenier:1985bz} which allow  particles to split and their paths to cross. This surprising behavior is an unavoidable counterpart of the lack of viscosity in Euler's equations, which amounts to an infinite Reynolds number. 
Generalized flows are also relevant in dimension $d \in \{1,2\}$ if the underlying physical model actually involves a three dimensional domain $X \times [0, \ve]^{3-d}$ in which one neglects the fluid acceleration in the extra dimensions \cite{Brenier:2008ho}.
Consider the space of continuous paths (of fluid particles)
\begin{equation*}
	\Omega := C^0([0,1],X).
\end{equation*}
Let $e_t(\omega):=\omega(t)$ be the evaluation map at time $t\in [0,1]$, so that $(e_0,e_1) (\omega) = (\omega(0), \omega(1))$. Let also $\Leb$ denote the Lebesgue measure restricted to the domain $X$, normalized for unit mass, and let $f \# \mu$ denote the push-forward of a measure $\mu$ by a measurable map $f$. 
The geodesic distance \eqref{eqdef:MinimizeSDiff} admits a convex relaxation, linearizing both the objective and the constraints, and for which the existence of a minimizer is guaranteed. It is posed on probability measures on $\Omega$, called \emph{generalized flows}
\begin{align}
\label{eqdef:MinimizeRelaxed}
	d^2(s_*,s^*) &:=\min_{\mu \in \Prob(\Omega)} \int_\Omega \cA(\omega) d \mu(\omega),
	&
	\text{subject to} &
	\begin{cases}
		\cA(\omega) := \int_0^1|\dot\omega(t)|^2 dt\\
		(\eval_0,\eval_1)\#\mu = (s_*,s^*)\#\Leb,\\
		\forall t \in [0,1], \ \eval_t\# \mu = \Leb.
	\end{cases}
%
\end{align}
Note that the path action $\cA : \Omega \to \mR_+ \cup \{+\infty\}$, although unbounded, is lower semi-continuous.
The first constraint $(\eval_0,\eval_1)\#\mu = (s_*,s^*)\#\Leb$ expresses that moving fluid particles from $s_*(x)$ to $s^*(x)$ for all $x \in X$, or from the origin $\omega(0)$ to the end $\omega(1)$ of the paths $\omega \in \Omega$ as weighted by $\mu$, yields equivalent transport plans. The second constraint $\eval_t\# \mu = \Leb$ states that the path positions $\omega(t)$, as weighted by $\mu$, equidistribute on $X$ at each time $t \in [0,1]$, which amounts to incompressibility. 
A classical flow $s \in H^1([0,1], \mS)$ can be regarded as a generalized flow, with paths $t \mapsto s(t,x)$, weighted by the Lebesgue measure on $x \in X$.
Our discretization truly solves \eqref{eqdef:MinimizeRelaxed}, rather than \eqref{eqdef:MinimizeSDiff}, and convergence is established in this relaxed setting.

\begin{figure}
\centering
	\includegraphics[height=4cm]{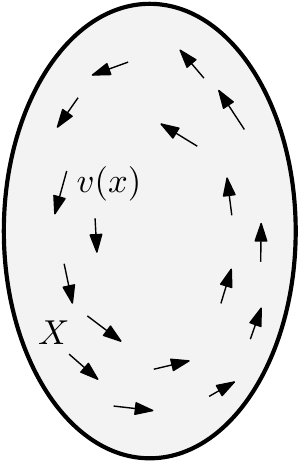}\hspace{1cm}
	\includegraphics[height=4cm]{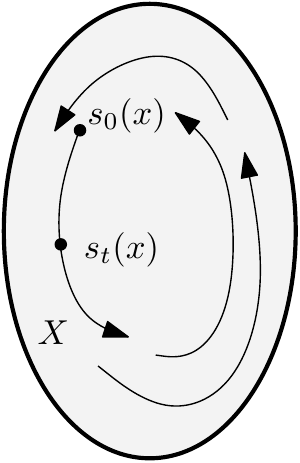}
	\hspace{1cm}
	\includegraphics[height=4cm]{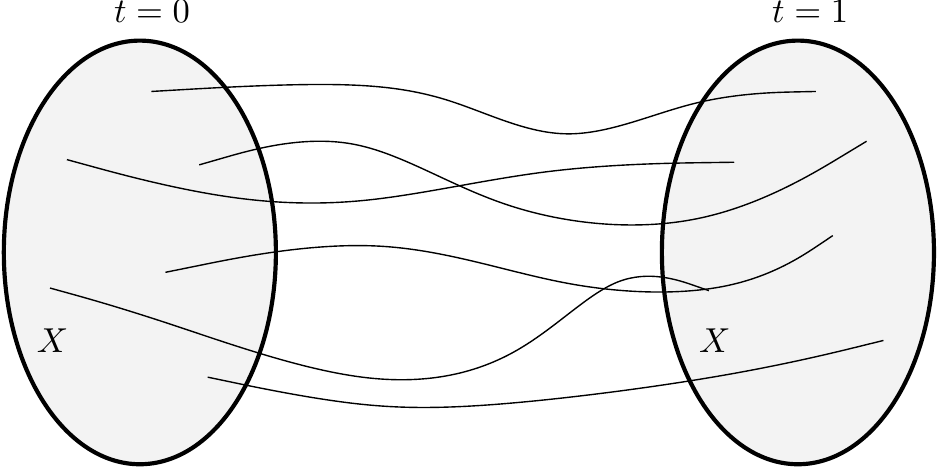}
	\caption{
	The motion of inviscid incompressible fluids admits three formulations, either (I) Eulerian based on the local speed $v : [0,1]\times X \to \mR^d$, (II) Lagrangian based on diffeomorphisms $s(t, \cdot)$ which integrate the speed: $\partial_t s(t,x) = v(t,s(t,x))$, or (III) relaxed as a superposition of individual  particles paths $\omega \in \Omega$, weighted by a measure $\mu$.
	}
\end{figure}

The incompressibility constraint in \eqref{eqdef:Euler}, \eqref{eqdef:MinimizeSDiff} and \eqref{eqdef:MinimizeRelaxed}, gives rise to a Lagrange multiplier, the pressure, which is the \emph{unique} maximizer to a concave optimization problem dual to \eqref{eqdef:MinimizeRelaxed}, see \cite{Brenier:1993wd}. The primal \eqref{eqdef:MinimizeRelaxed} may in contrast have several solutions, up to the notable exception \cite{Bernot:2009ed} of smooth flows in dimension $d=1$. 
The pressure is a classical function $p \in L^2_{\rm loc}(\,]0,T[ ,\, \BV(X))$, see \cite{Ambrosio:2007jk} (which requires the technical assumption that $X$ is a $d$-dimensional torus). This regularity is sufficient to show that any solution $s$ to \eqref{eqdef:MinimizeSDiff} (resp.\ $\mu$-almost any path $\omega$, for any solution $\mu$ to \eqref{eqdef:MinimizeRelaxed}) satisfies 
\begin{align}	
\label{eq:PathDynamics}
	\partial_{tt} s(t,x) &= -\nabla p(t,s(t,x)), & \text{resp.} \quad  \ddot \omega(t) = -\nabla p(t,\omega(t)). 
\end{align}
In other words, fluid particles move by inertia, only deflected by the force of pressure. 
Assume that the pressure hessian is sufficiently small, precisely that
\begin{equation}
\label{eq:PressureClassical}
\forall t\in[0,1], \ \forall x \in X, \ 
\nabla^2 p(t,x) \prec \pi^2 \Id
\end{equation}
in the sense of symmetric matrices. Then using the path dynamics equation \eqref{eq:PathDynamics} Brenier \cite{Brenier:1985bz} showed that the relaxed problem \eqref{eqdef:MinimizeRelaxed} admits a unique minimizer $\mu \in \Prob(\Omega)$, which is \emph{deterministic}: in other words associated to a, possibly non-smooth but otherwise classical, minimizer $s \in H^1([0,T], \mS)$ of \eqref{eqdef:MinimizeSDiff}. Inequality \eqref{eq:PressureClassical} is sharp, and several families of examples are known for which uniqueness and/or determinism are lost precisely when the threshold \eqref{eq:PressureClassical} is passed. We present \S \ref{sec:Numerics} the first numerical illustration of this phenomenon.

\subsection{Numerical scheme and main results}

\begin{figure}
\centering
	\includegraphics[height=2.7cm]{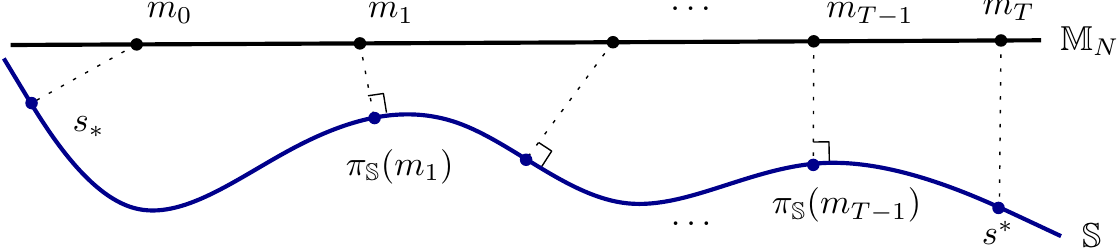}
	\caption{The geodesic distance $d^2(s_*,s^*)$ along the ``manifold'' $\mS$ of volume preserving maps, represented as a blue curve, is estimated \eqref{eqdef:DiscreteOptimization} as the length of a chain $(m_i)_{i=0}^T$ in the linear subspace $\mM_N$, represented as a black line, plus penalizations for the boundary values and the distance from the chain elements to $\mS$.}
	\label{fig:ProjMToS}
\end{figure}


We introduce a new discretization for the relaxation \eqref{eqdef:MinimizeRelaxed} of the shortest path formulation \eqref{eqdef:MinimizeSDiff} of Euler equations \eqref{eqdef:Euler}. Our approach is numerically tractable in dimension $2$, and is the first to illustrate the transition between classical and generalized solutions occurring at the threshold \eqref{eq:PressureClassical} on the pressure regularity. 

For that purpose we need to introduce some notation. Let $\mM := L^2(X, \mR^d)$, and let $\mS \subset \mM$ be the collection of maps preserving the restriction to $X$ of the Lebesgue measure, denoted by $\Leb$ and normalized to have mass $1$. For each $N \in \mN$ let $\cP_N$ be a partition of $X$ into $N$ regions of equal area $1/N$, diameter $\leq C_\cP N^{-\frac 1 d}$, and let $\mM_N \subset \mM$ be the $N$-dimensional subspace of functions which are piecewise constant on this partition. Given $s_*, s^* \in \mS$, discretization parameters $T,N \in \mN$, and a penalization factor $\lambda \gg 1$, we solve
\begin{equation}
\label{eqdef:DiscreteOptimization}
\cE(T,N,\lambda):=
	\min_{m \in \mM_N^{T+1}} 
	T \sum_{0 \leq i < T} \|m_{i+1}-m_i\|^2 
	+ \lambda\biggl(\|m_0-s_*\|^2 + \|m_T-s^*\|^2
	+  \sum_{1 \leq i < T} \inf_{s \in \mS} \|m_i-s\|^2 \biggr).
\end{equation}
In all this paper, $\|\cdot\|$ stands for the $L^2(X, \mR^n)$ norm, and $|\cdot|$ for the euclidean norm on $\mR^n$, for any $n \in \mN$.
Comparing this with \eqref{eqdef:MinimizeSDiff}, we recognize the standard discretization of the length of the discrete path $(m_0, \cdots, m_T)$, as well as an implementation by penalization of the boundary value constraints and of the incompressibility constraints. The optimization of \eqref{eqdef:DiscreteOptimization}, seen as a function of $m \in \mM_N^{T+1}$, is an $N (T+1)d$-dimensional smooth optimization problem. A quasi-Newton method gave convincing results, see \S \ref{sec:Numerics}, despite the non-convexity of the functional which forbids to guarantee that its global minimum is numerically found.


Before entering the analysis of \eqref{eqdef:DiscreteOptimization}, let us emphasize that the inner-subproblems, the projection of each $m_i$ onto the set $\mS$ of measure preserving maps, are numerically tractable thanks to two main ingredients: Brenier's polar factorization \cite{Brenier:1991fz}, and semi-discrete optimal transport. The former states that the distance from any given $m \in \mM$ to the set $\mS$, is the cost of the transport plan needed to equidistribute on $X$ the image measure of $m$
\begin{equation}
\label{eq:DistanceToIncompressibles}
	\inf_{s \in \mS} \|m-s\|^2  = W_2^2( m \# \Leb, \Leb),
\end{equation}
where $W_2$ is the Wasserstein distance for the quadratic transport cost.
If $m \in \mM_N$, then $m \# \Leb$ is the sum of $N$ Dirac measures of mass $1/N$, located at the $N$ values of the piecewise constant map $m$ on the partition $\cP_N$. 
Semi-discrete optimal transport \cite{Aurenhammer:1998ie,Merigot:2011js,Levy:2014un} is a numerical method for computing \eqref{eq:DistanceToIncompressibles}, and more generally the Wasserstein distance between a discrete measure $\eta = \sum_{j=1}^N \eta_j \delta_{x_j}$, and an absolutely continuous measure $\nu = \rho\Leb$, with a (typically) piecewise linear density $\rho$. It is based on Kantorovitch duality 
\begin{align}
	\label{eq:Kantorovitch}
	W_2^2(\eta, \nu) &= \sup_{f \in L^1(\eta)} \int_X f\eta + \int_X g \nu,  & &\text{where } \forall y \in X,\  g(y) = \inf_{x \in X} |y-x|^2 - f(x), \\
	\label{eq:KantorovitchDiscrete}
	& = \sup_{f \in \mR^N} \sum_{1 \leq j \leq N} \eta_j f_j + \int_X g \nu,  & &\text{where } \forall y \in X,\  g(y) = \min_{1 \leq j \leq N} |y-x_j|^2 - f_j,
\end{align}
where \eqref{eq:KantorovitchDiscrete} is obtained from \eqref{eq:Kantorovitch} by setting $f_j = f(x_j)$. Importantly, the conjugate $g$ in \eqref{eq:KantorovitchDiscrete} is piecewise quadratic on a partition of $X$, called the Laguerre Diagram of the sites $x_j$ with weights $f_j$, that is constructible through computational geometry software \cite{cgal}. 
The $N$-dimensional concave maximization problem \eqref{eq:KantorovitchDiscrete}, which is unconstrained and twice continuously differentiable, is 
efficiently solved via Newton or quasi-Newton methods. Semi-discrete optimal transport has become a reliable and efficient building block for PDE discretizations \cite{Benamou:2014ud}.

A second interpretation of the optimization problem
\eqref{eqdef:DiscreteOptimization}, closer to
\eqref{eqdef:MinimizeRelaxed}, involves a generalized flow $\mu \in
\Prob(\Omega)$ supported on $N$ trajectories, each piecewise linear
with direction changes at times $\{0,1/T, \cdots, T/T\}$. Let
$m=(m_i)_{i=0}^T \in \mM_N^{T+1}$, and for each $1 \leq j \leq N$ let
$m_i^j$ be the constant value of $m_i$ on the $j$-th region of the
partition $\cP_N$ of $X$. For each $1 \leq j \leq N$ let $\omega_j \in
\Omega$ be the piecewise linear path with value $m_i^j$ at time $i/T$,
for all $0 \leq i \leq T$ (see
Figure~\ref{fig:SolutionToCurves}). Finally let $\mu \in
\Prob(\Omega)$ be the discrete probability measure equidistributed on
the set of paths $\{\omega_j; \ 1 \leq j \leq N\}$. Then
\eqref{eqdef:DiscreteOptimization} rewrites in a form close to
\eqref{eqdef:MinimizeRelaxed}
\begin{equation}
\label{eq:DiscreteRelaxed}
	\int_\Omega \cA(\omega) d \mu(\omega) + 
	\lambda\Bigg(\int_X |m_0(x)-s_*(x)|^2+|m_T(x) - s^*(x)|^2 dx + 
	\sum_{\substack{1 \leq i < T\\ t=i/T}} W_2^2(e_t \# \mu, \Leb) \Bigg).
\end{equation}
Indeed, the first energy term satisfies 
\begin{equation*}
	\int_\Omega \cA(\omega) d\mu(\omega) = \frac 1 N \sum_{1 \leq j \leq N} \int_0^1 | \dot \omega_j(t)|^2 dt = \frac T N \sum_{\substack{0 \leq i < T\\1 \leq j \leq N}} |m_{i+1}^j-m_i^j|^2 = T \sum_{0 \leq i < T} \|m_{i+1}-m_i\|^2.
\end{equation*}
The penalized integral term in \eqref{eq:DiscreteRelaxed} equals $\|m_0 - s_*\|^2+\|m_T-s^*\|^2$ from \eqref{eqdef:DiscreteOptimization}. It is the cost of the transport plan on $X^2$ mapping $(m_0(x), m_T(x))$ to $(s_*(x), s^*(x))$ for all $x \in X$, which sends $(\eval_0,\eval_1)\# \mu = (m_0,m_T)\# \Leb$ onto $(s_*,s^*)\#\Leb$, and thus enforces the proximity of these two couplings on $X^2$ as required in \eqref{eqdef:MinimizeRelaxed}. The other penalized terms $W_2^2(\eval_t \# \mu, \Leb)$ account for the incompressibility of $\mu$ at time $t=i/T$, $1 \leq i < T$, and by \eqref{eq:DistanceToIncompressibles} are equal to $\inf_{s \in \mS} \|m_i-s\|^2$ from \eqref{eqdef:DiscreteOptimization}.

\begin{figure}
\centering
	\includegraphics[height=4cm]{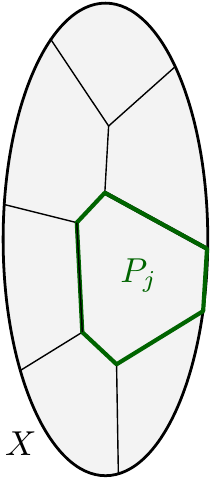}
	\hspace{1cm}
	\includegraphics[height=4cm]{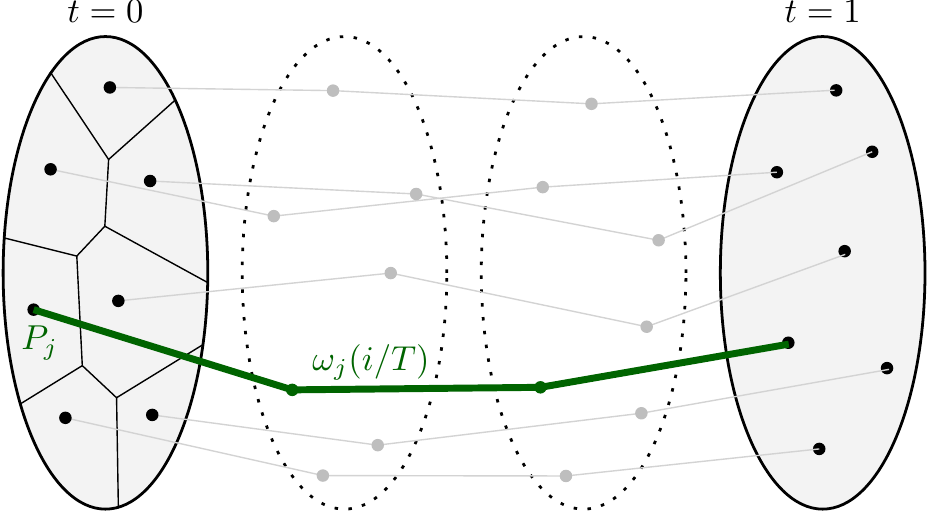}
	\caption{(Left) A partition $\cP_N$ cuts the domain $X$ into $N$ region of equal area and roughly isotropic shape. (Right) To a sequence $(m_i)_{i=0}^T \in \mM_N^{T+1}$ one can associate $N$ piecewise linear paths $(\omega_j)_{j=1}^N$, by interpolating the map values at the times $\{0,1/T, \cdots, T/T\}$ for each region of the partition $\cP_N$.\label{fig:SolutionToCurves}}
\end{figure}

Summarizing, the geodesic formulation of Euler equations \eqref{eqdef:MinimizeSDiff} has a rather surprising relaxation \eqref{eqdef:MinimizeRelaxed}, looking a-priori unphysical: fluid particles may split and cross. Yet the natural discretizations \eqref{eqdef:DiscreteOptimization} and \eqref{eq:DiscreteRelaxed} of these two formulations are actually \emph{identical}. The classical and generalized interpretations are also at the heart of our main result.
\begin{theorem}
\label{th:UpperBound}
	Let $s_*,s^* \in \mS$, let $T,N \in \mN$, and $\lambda \geq 0$. 
	The relaxed geodesic distance \eqref{eqdef:MinimizeRelaxed} 
	and the discretized minimum \eqref{eqdef:DiscreteOptimization} 
	 satisfy 
	\begin{equation*}
		\cE(T,N, \lambda) \leq d(s_*,s^*) + \cO(T h_N^2 \lambda)	,
	\end{equation*}
	\begin{itemize}
		\item (Classical estimate)  with $h_N = N^{-\frac 1 d}$, if the classical geodesic distance \eqref{eqdef:MinimizeSDiff} equals the relaxed distance \eqref{eqdef:MinimizeRelaxed}, and admits a minimizer with regularity $s \in L^\infty([0,1], H^1(X))$.
		\item (Relaxed estimate) with $h_N = N^{-\frac 1 {2d}}$ (resp. $N^{-\frac 1 2} \sqrt {\ln N}$ if $d=1$), if the pressure field gradient $\nabla p$ is Lipschitz on $[0,1] \times X$, and the boundary data $s_*^{-1}, s^*$ are Lipschitz on $X$.
	\end{itemize}
\end{theorem}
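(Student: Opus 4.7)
My plan is to exhibit, in each of the two cases, an explicit competitor $m = (m_i)_{i=0}^T \in \mM_N^{T+1}$ for \eqref{eqdef:DiscreteOptimization}, and bound each of the four contributions to $\cE(T,N,\lambda)$ separately: the discrete kinetic energy $T\sum\|m_{i+1}-m_i\|^2$, the two boundary misfits, and the $T-1$ incompressibility penalties.

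\textbf{Classical case.} Given a minimizer $s \in L^\infty([0,1], H^1(X))$ of \eqref{eqdef:MinimizeSDiff}, let $\Pi_N : \mM \to \mM_N$ denote the $L^2$-orthogonal projection, i.e.\ cell-averaging on $\cP_N$, and take the natural competitor $m_i := \Pi_N s(i/T)$. The kinetic term is bounded by $\int_0^1\|\dot s\|^2\,dt = d^2(s_*,s^*)$ via contractivity of $\Pi_N$ and Jensen's inequality on each time slab $[i/T,(i+1)/T]$. The two boundary terms and the $T-1$ incompressibility terms are each controlled by the standard $H^1$ piecewise-constant approximation bound $\|u - \Pi_N u\|^2 \leq C\,h_N^2\,\|u\|_{H^1}^2$ with $h_N = N^{-1/d}$, applied to $u = s_*, s^*$, and $u = s(i/T)$ respectively; the uniform control $\|s\|_{L^\infty H^1} < \infty$ then bounds the $T-1$ interior terms simultaneously, yielding the total error $\cO(T h_N^2 \lambda)$.

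\textbf{Relaxed case.} Given an optimal generalized flow $\mu$ for \eqref{eqdef:MinimizeRelaxed}, the Lipschitz regularity of $\nabla p$ combined with \eqref{eq:PathDynamics} implies that $\mu$-a.e.\ path satisfies a uniquely solvable second-order ODE, so each is determined by its initial phase-space datum. Let $\sigma \in \Prob(X \times \mR^d)$ denote the joint law of $(\omega(0),\dot\omega(0))$ under $\mu$; the action bound $\int \cA\,d\mu < \infty$ combined with the ODE structure yields compact support. Let $\phi^t$ be the associated bi-Lipschitz phase-space flow, and construct an empirical quantization $\sigma_N = \frac 1 N \sum_j \delta_{(y_j,v_j)}$ satisfying $W_2^2(\sigma_N,\sigma) = \cO(h_N^2)$ with $h_N = N^{-1/(2d)}$ (and the announced logarithmic correction when $d=1$)—this is the natural quantization rate for a compactly supported measure on a $2d$-dimensional phase space. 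Solve the ODE from each $(y_j,v_j)$ to produce a path $\omega_j$. Finally choose a bijection $\pi$ between the indices $j$ and the regions of $\cP_N$ via the optimal assignment pairing the points $s_*^{-1}(y_j)$ with the regions, whose cost is again controlled by the quantization rate, and set $m_i^{\pi(j)} := \omega_j(i/T)$. Each of the three terms in $\cE$ is then bounded as follows. The identity from \eqref{eq:DiscreteRelaxed} together with Cauchy-Schwarz gives $T\sum_i \|m_{i+1}-m_i\|^2 \leq \int \cA\,d\mu_N$, and the right-hand side differs from $\int \cA\,d\mu = d^2(s_*,s^*)$ by a term absorbed into the announced error, using continuity of the action under perturbation of the initial data of a Lipschitz ODE. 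Each incompressibility penalty equals $W_2^2(\eval_{i/T}\#\mu_N, \Leb)$ by \eqref{eq:DistanceToIncompressibles}, and is bounded by $\cO(h_N^2)$ via the $W_2$-contractivity of bi-Lipschitz push-forward by $\phi^{i/T}$ and of projection to $X$. The boundary term $\|m_0 - s_*\|^2$ is handled by the change of variable $y = s_*(x)$ (valid since $s_* \in \mS$ preserves $\Leb$), the Lipschitz bound on $s_*^{-1}$, and the optimal-assignment property of $\pi$; the term $\|m_T - s^*\|^2$ is handled analogously via Lipschitz $s^*$.

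The central obstacle is producing a single bijection $\pi$ that is simultaneously near-optimal for both boundary misfits and all intermediate incompressibility constraints at once. The key idea is that the bi-Lipschitz phase-space flow $\phi^t$ reduces every one of these discrepancies to the single quantization error $W_2^2(\sigma_N,\sigma)$, up to multiplicative constants depending on $\|\nabla p\|_{\mathrm{Lip}}$ and $t\in[0,1]$; the pairing $\pi$ can then be chosen to respect this common quantization structure on $\sigma$, which explains the asymmetric Lipschitz hypotheses (on $s_*^{-1}$ rather than $s_*$, and on $s^*$ rather than $(s^*)^{-1}$) appearing in the statement.
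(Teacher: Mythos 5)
Your classical-case argument is essentially identical to the paper's: same competitor $m_i = \Pi_N s(i/T)$, same contractivity-plus-Jensen bound on the kinetic term, same Poincar\'e-type estimate $\|u-\Pi_N u\|^2 \leq C h_N^2 \|u\|_{H^1}^2$ for each penalization term. No issues there.

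The relaxed case is where the gap lies, and it is a real one. Your competitor paths $\omega_j$ are taken to be genuine ODE solutions launched from a quantization of the phase-space measure $\sigma$. The discrete kinetic energy then satisfies $T\sum_i\|m_{i+1}-m_i\|^2 \leq \int \cA\,d\mu_N$, as you say, but your claim that $\int \cA\,d\mu_N - \int\cA\,d\mu$ is ``absorbed into the announced error'' does not hold. The action $\cA$, restricted to the compact set $\Gamma$ of ODE paths, is Lipschitz (indeed $|\cA(\omega')-\cA(\omega)| \leq C\|\omega'-\omega\|_{\mH}$ with $C$ controlling $\|\dot\omega\|$ on $\Gamma$), and no better: its differential does not vanish at the paths you perturb. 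Hence $|\int\cA\,d\mu_N - \int\cA\,d\mu| \leq C\,W_1(\mu,\mu_N) = \cO(h_N)$, a \emph{first-order} error. But the theorem asserts $\cE \leq d^2 + \cO(T\lambda h_N^2)$ for all $T,N,\lambda$, and $\cO(h_N)$ is not $\cO(T\lambda h_N^2)$ unless $T\lambda \gtrsim h_N^{-1}$, which is not assumed (take $T=\lambda=1$). The announced error must come entirely from the penalization terms; the kinetic bound has to be exact.

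The device the paper uses to get the kinetic bound with \emph{no} error is barycentric. One first proves (Lemma~\ref{lem:ExistsOptimal}) that the optimal quantization of $\mu$ in $\mH=H^1([0,1],\mR^d)$ is attained, yielding a decomposition $\mu = \frac1N\sum_j\rho_j$ with $\omega_j$ the $\mH$-barycenter of $\rho_j$. These $\omega_j$ are not in $\Gamma$ (they do not solve the ODE), but precisely because $\dot\omega_j(t) = \int_\Gamma \dot\omega(t)\,d\rho_j(\omega)$, Jensen gives $\int_0^1|\dot\omega_j|^2 \leq \int_\Gamma \cA\,d\rho_j$, whence $T\sum_i\|m_{i+1}-m_i\|^2 \leq \frac1N\sum_j\cA(\omega_j) \leq \int_\Gamma\cA\,d\mu = d^2(s_*,s^*)$, with equality of the limit and no extra term. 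Your phase-space/bi-Lipschitz observations are exactly what the paper uses to bound $h_N$ and to control the incompressibility and boundary terms (Lemmas~\ref{lem:CauchyLipschitz}--\ref{lem:Indexation}), and your remark about the asymmetric Lipschitz hypotheses on $s_*^{-1}$ and $s^*$ is correct; but without switching the competitor from ODE-paths to $\mH$-barycenters, the kinetic estimate does not close.
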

Recall that the classical \eqref{eqdef:MinimizeSDiff} and relaxed \eqref{eqdef:MinimizeRelaxed} distances are automatically equal in dimension $d \geq 3$, and that the pressure field gradient $\nabla p$ is uniquely determined by the boundary values $s_*,s^*$.

The decay rate $h_N = N^{-\frac 1 D}$ in Theorem \ref{th:UpperBound} is actually tied to the dimension $D_{\rm quant}(\mu)$ of the generalized flow $\mu \in \Prob(\Omega)$ minimizing \eqref{eqdef:MinimizeRelaxed}, see Definition \ref{def:Dimension}. The flow associated to a classical solution has dimension $d$, since the particle trajectories are determined by their initial position $x \in X \subset \mR^d$. The trajectories of a generalized flow obey a second order ordinary differential equation \eqref{eq:PathDynamics} and are thus determined by their initial position and velocity $(x,v) \in X \times \mR^d \subset \mR^{2d}$, provided Cauchy-Lipschitz's theorem applies.
The generalized flow dimension is thus $2d$ in the worst case, but intermediate dimensions $d < D < 2d$ are also common, see \S \ref{sec:Numerics}.

Theorem \ref{th:UpperBound} does not tell how to choose the constraint penalization parameter $\lambda$. The next proposition shows that the quantity $\cE'(T,N,\lambda) := (1+4 T/\lambda) \cE(T,N,\lambda)$ arises naturally in error estimates, which suggests to choose $\lambda = h_N^{-1} = N^{\frac 1 D}$ so that 
\begin{equation}
\label{eq:ChoiceLambda}
	\cE'(T,N,\lambda) 
	= d^2(s_*,s^*) + \cO(T/\lambda + T h_N^2 \lambda) = d^2(s_*,s^*) + \cO(T N^{-\frac 1 D}).
\end{equation}

\begin{proposition}
\label{prop:Construction}
Let $m=(m_i)_{i=0}^T\in \mM_N^{T+1}$ be a minimizer of \eqref{eqdef:DiscreteOptimization}. 
\begin{itemize}
	\item (Classical construction) Let $(s_i)_{i=0}^T$ be the chain of incompressible maps defined by: $s_0 = s_*$, $s_T=s^*$, and $s_i$ is a projection of $m_i$ onto $\mS$ for all $1\leq i < T$. Then 
	\begin{equation*}
		T \sum_{0 \leq i < T} \|s_{i+1}-s_i\|^2 \leq \cE'(T,N,\lambda). 
	\end{equation*}
	\item (Relaxed construction) Let $\mu \in \Prob(\Omega)$ be the generalized flow built from $(m_i)_{i=0}^T$ as in \eqref{eq:DiscreteRelaxed}. Then
	\begin{equation*}
		\int_0^1 W_2^2(\eval_t \# \mu,\Leb) \, dt \leq \frac 1 {4 T^2}  \cE'(T,N,\lambda). 
	\end{equation*}
	As a result, let $(N_T, \lambda_T)_{T \in \mN}$ be such that $\cE'(T,N_T,\lambda_T) \to d(s_*,s^*)$ as $T \to \infty$. Then a subsequence of the associated flows $(\mu_T)_{T \in \mN}$ weak-* converges to a minimizer of \eqref{eqdef:MinimizeRelaxed}.
\end{itemize}
\end{proposition}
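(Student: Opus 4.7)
The plan is as follows. Introduce $u_i := m_i - s_i$ for $0 \leq i \leq T$, with the convention $s_0 := s_*$, $s_T := s^*$, and $s_i$ a projection of $m_i$ onto $\mS$ at intermediate indices. The functional rewrites as
\[ \cE(T,N,\lambda) = T \sum_{0 \leq i < T} \|m_{i+1} - m_i\|^2 + \lambda \sum_{i=0}^T \|u_i\|^2, \]
and both estimates of the proposition will follow from a Young-type inequality in which the balancing choice $\alpha := 4T/\lambda$ makes all coefficients collapse to $(1+4T/\lambda)$, producing the prefactor defining $\cE'$.

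For the classical construction, I exploit the identity $s_{i+1} - s_i = (m_{i+1} - m_i) - (u_{i+1} - u_i)$. Young's inequality $\|x-y\|^2 \leq (1+\alpha)\|x\|^2 + (1+1/\alpha)\|y\|^2$ together with the crude bound $\|u_{i+1}-u_i\|^2 \leq 2\|u_i\|^2 + 2\|u_{i+1}\|^2$ yields, after summing and using $\sum_{i=0}^{T-1}(\|u_i\|^2 + \|u_{i+1}\|^2) \leq 2 \sum_{i=0}^T \|u_i\|^2$,
\[ T \sum_{0 \leq i < T} \|s_{i+1}-s_i\|^2 \leq (1+\alpha)\, T \sum_{0 \leq i < T} \|m_{i+1}-m_i\|^2 + \frac{4T(1+1/\alpha)}{\lambda} \cdot \lambda \sum_{i=0}^T \|u_i\|^2. \]
Plugging $\alpha = 4T/\lambda$ makes both coefficients equal to $(1+4T/\lambda)$, whence $T \sum \|s_{i+1}-s_i\|^2 \leq \cE'(T,N,\lambda)$.

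For the relaxed construction, I parametrize each subinterval by $\tau := T(t-i/T) \in [0,1]$, so $\eval_t\#\mu = M_{i,\tau}\#\Leb$ with $M_{i,\tau} := (1-\tau) m_i + \tau m_{i+1}$. Pivoting through $m_i$ or $m_{i+1}$ in the triangle inequality for $W_2$, and using $W_2(M_{i,\tau}\#\Leb, m_j\#\Leb) \leq \|M_{i,\tau}-m_j\|$ together with $W_2(m_j\#\Leb, \Leb) \leq \|u_j\|$, yields the two bounds $W_2(\eval_t\#\mu, \Leb) \leq \tau\|m_{i+1}-m_i\| + \|u_i\|$ and $W_2(\eval_t\#\mu, \Leb) \leq (1-\tau)\|m_{i+1}-m_i\| + \|u_{i+1}\|$. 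Squaring each with Young and taking the convex combination with respective weights $(1-\tau)$ and $\tau$ gives
\[ W_2^2(\eval_t\#\mu, \Leb) \leq (1+\alpha)\,\tau(1-\tau)\,\|m_{i+1}-m_i\|^2 + (1+1/\alpha)\bigl[(1-\tau)\|u_i\|^2 + \tau\|u_{i+1}\|^2\bigr]. \]
Integrating in $\tau$ (using $\int_0^1 \tau(1-\tau)d\tau = 1/6$ and $\int_0^1(1-\tau)d\tau = 1/2$), summing over $i$, and undoing the time change of variable ($dt = d\tau/T$) produces an estimate with coefficients $(1+\alpha)/(6T^2)$ on the kinetic part and $(1+1/\alpha)/(T\lambda)$ on the penalty part. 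The same choice $\alpha = 4T/\lambda$ makes the penalty coefficient exactly $(1+4T/\lambda)/(4T^2)$ and dominates the kinetic coefficient by the same quantity (since $1/6 < 1/4$), delivering $\int_0^1 W_2^2 dt \leq \cE'/(4T^2)$.

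For the final weak-* convergence claim, the bounded action $\int \cA d\mu_T = T\sum\|m_{i+1}-m_i\|^2 \leq \cE'_T$ combined with compactness of $X$ yields tightness of $(\mu_T)$ in $\Prob(\Omega)$ via a Markov--Chebyshev plus Ascoli argument, using the pathwise bound $|\omega(t)-\omega(s)|^2 \leq |t-s|\,\cA(\omega)$. Any weak-* subsequential limit $\mu$ satisfies $\int \cA d\mu \leq \liminf_T \cE'_T = d^2(s_*,s^*)$ by lower semi-continuity of $\cA$. The boundary marginal passes to the limit because $\|m_0-s_*\|^2,\|m_T-s^*\|^2 \leq \cE_T/\lambda_T = \cE'_T/(\lambda_T+4T) \to 0$ as $T \to \infty$, ensuring $(\eval_0,\eval_1)\#\mu = (s_*,s^*)\#\Leb$. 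The incompressibility $\eval_t\#\mu = \Leb$ follows, up to a further subsequence, from Part 2 and Fatou for almost every $t$, and extends to all $t$ by weak continuity of $t \mapsto \eval_t\#\mu$; feasibility plus the action bound then forces $\mu$ to minimize \eqref{eqdef:MinimizeRelaxed}. The main conceptual obstacle lies in identifying the sharp balancing value $\alpha = 4T/\lambda$ that matches the two estimates; once spotted, the remaining inequality work and the limit passage are routine.
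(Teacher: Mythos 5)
Your proof is correct and follows essentially the same route as the paper. For the classical construction, your Young-inequality decomposition $s_{i+1}-s_i = (m_{i+1}-m_i) - (u_{i+1}-u_i)$ with balancing $\alpha = 4T/\lambda$ is exactly Lemma~\ref{lem:Length} of the paper (stated via the equivalent form $(1+\ve)^{-1}\|a+b\|^2 \leq \|a\|^2 + \ve^{-1}\|b\|^2$). For the relaxed construction, the paper instead applies the polar-factorization identity $W_2^2(\eval_t\#\mu,\Leb) = \inf_{s\in\mS}\|(1-\alpha)m_i + \alpha m_{i+1} - s\|^2$ directly, then Young, pivoting through $m_i$ when $\alpha \leq 1/2$ and through $m_{i+1}$ otherwise, while you pivot through both endpoints via the $W_2$ triangle inequality and a convex combination with weights $(1-\tau),\tau$; both reach the same bound $\cE'/(4T^2)$, with your route giving the slightly sharper intermediate kinetic coefficient $1/6$ before it is relaxed to $1/4$. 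The final weak-* convergence argument (tightness from the action bound, lower semi-continuity of $\cA$, vanishing boundary penalties giving the coupling constraint, Fatou plus $t$-continuity of the marginals for incompressibility, then minimality by feasibility) matches the paper's items (i)--(iv). One cosmetic care to take if you write it up: in part 2 the incompressibility penalty for $i=0,T$ uses $\|u_0\|,\|u_T\|$ rather than the true $d_\mS$-distances, but since $W_2(m_j\#\Leb,\Leb) = d_\mS(m_j) \leq \|u_j\|$ the inequalities still go the right way, as you implicitly noted.
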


\paragraph{Outline.} Theorem \ref{th:UpperBound} is established \S \ref{subsec:ClassicalUpperBound} (Classical estimate) and \S \ref{sec:Relaxed} (Relaxed estimate). Proposition \ref{prop:Construction} is proved \S \ref{subsec:IncompressibleChain}. Numerical experiments are presented \S \ref{sec:Numerics}.

\begin{remark}[Monge-Ampere gravitation]
  Some models of reconstruction of the early universe
  \cite{Brenier:2011hs} involve actions of a form closely related to
  our discrete energy functional \eqref{eqdef:DiscreteOptimization},
  for the parameter value
  $\lambda=2$: 
\begin{equation*}
	\int_{0}^{1} \left(\frac 1 2 \|\dot m(t)\|^2 + \inf_{s \in \mS} \|m(t)-s\|^2\right) dt. 
\end{equation*}
\end{remark}

\section{Classical analysis}
\label{sec:Classical}

We establish Theorem \ref{th:UpperBound} (Classical estimate) in \S \ref{subsec:ClassicalUpperBound}, and prove Proposition \ref{prop:Construction} in \S \ref{subsec:IncompressibleChain}. The optimization parameters $(T,N,\lambda, s_*,s^*)$ are fixed in this section.

\subsection{Upper estimate of the discretized energy}
\label{subsec:ClassicalUpperBound}

Following the assumption of Theorem \ref{th:UpperBound} (Classical estimate), we consider a minimizer of the shortest path problem \eqref{eqdef:MinimizeSDiff}, and assume that it has regularity $s \in L^\infty([0,1], H^1(X))$.
Define $s_i := s(i/T)$ for all $0 \leq i \leq T$, and note that $s_0=s_*$, $s_T=s^*$. Let also $m_i := \mP_N (s_i)$, for all $0 \leq i \leq T$, where $\mP_N : \mM \to \mM_N$ denotes the orthogonal projection. 
We denote by $h_N := N^{-\frac 1 d}$ the discretization scale, and recall that each region of the partition $\cP_N$ of $X$ has area $1/N$ and diameter $\leq C_\cP h_N$.

Let $s_i^P$ denote the mean of $s_i$ on the region $P$ of the partition $\cP_N$, for all $0 \leq i \leq T$. Then
\begin{align}
\label{eq:msClassical}
	\|s_i-m_i\|^2  = \sum_{P \in \cP_N} \int_P |s_i(x)-s_i^P|^2 dx &\leq C_{\rm sb} (C_\cP h_N)^2 \sum_{P \in \cP_N} \int_P |\nabla s_i(x)|^2 dx = C h_N^2 \|\nabla s_i\|^2,
\end{align}
where the Sobolev inequality constant $C_{\rm sb}$ only depends on the dimension, and $C := C_{\rm sb} C_\cP^2$. Recall that, in all this paper, $\|\cdot\|$ stands for the $L^2(X, \mR^n)$ norm, and $|\cdot|$ for the euclidean norm on $\mR^n$, for any integer $n \geq 1$.
The map $\mP_N$ is $1$-Lipschitz, as the orthogonal projection onto the convex set $\mM_N$. 
Hence for any $0 \leq i < T$
\begin{align}
\label{eq:ssClassical}
\|m_i - m_{i+1}\|^2 &\leq \|s_i - s_{i+1}\|^2 \leq \frac 1 T \int_{\frac i T}^{\frac{i+1} T} \|\dot s(t) \|^2 dt.
\end{align}
Summing \eqref{eq:msClassical} and \eqref{eq:ssClassical} over $0 \leq i \leq T$ we obtain
\begin{align*}
\cE(T,N,\lambda)  &\leq T\sum_{0 \leq i < T} \|m_{i+1}-m_i\|^2 + \lambda\sum_{0 \leq i \leq T} \|m_i-s_i\|^2 \\
& \leq \sum_{0 \leq i < T} \int_{\frac i T}^{\frac{i+1} T} \| \dot s(t) \|^2 dt + \lambda \sum_{0 \leq i \leq T} C h_N^2 \|\nabla s_i\|^2 \\
& \leq d^2(s_*,s^*) + C' T h_N^2\lambda,
\end{align*}
where $C' = C \|s\|_{L^\infty([0,1], H^1(X))}$, which concludes the proof.

\begin{figure}
\centering
\includegraphics[height=3cm]{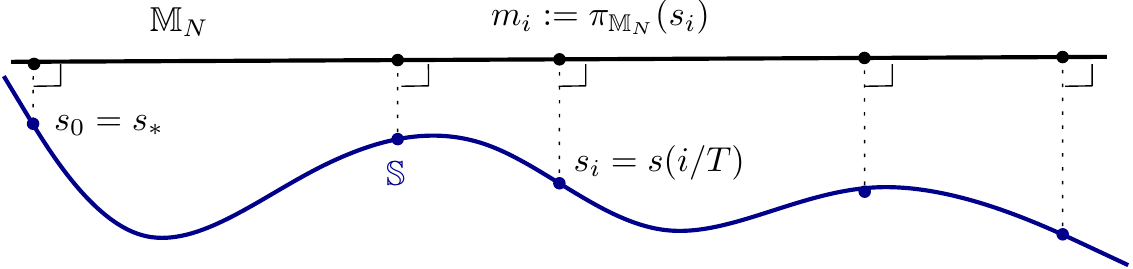}
	\caption{Theorem \ref{th:UpperBound} (classical estimate) is based projecting the measure preserving maps $(s_i)_{i=0}^T \in \mS^{T+1}$ onto the finite dimensional space $\mM_N$, a procedure symmetric the projection of $(m_i)_{i=1}^{T-1} \in \mM_N^{T-1}$ onto $\mS$ involved in the discrete energy optimization \eqref{eqdef:DiscreteOptimization}, see Figure \ref{fig:ProjMToS}.}
	\label{fig:ProjSToM}
\end{figure}

\subsection{Length of a chain of incompressible maps}
\label{subsec:IncompressibleChain}

Proposition \ref{prop:Construction} (Classical construction) immediately follows from Lemma \ref{lem:Length} below, which is general and could be used to approximate geodesics on any manifold $\mS$ embedded in a Hilbert space $\mM$, internally approximated by subspaces $\mM_N$.
It relies on a the following identity, valid for any elements $a,b$ of a Hilbert space, and any $\ve >0$:
\begin{equation}
\label{eq:SquareSumEps}
(1+\ve)^{-1}\|a+b\|^2 \leq  \|a\|^2+ \ve^{-1} \| b\|^2.
\end{equation}
Indeed subtracting the LHS to the RHS of \eqref{eq:SquareSumEps} we obtain $(1+\ve)^{-1} \| \ve^{\frac 1 2}a - \ve^{-\frac 1 2}b \|^2 \geq 0$.

\begin{lemma}
\label{lem:Length}
For any $T \in \mN^*$, any penalization $\lambda >0$, and any $(m,s) \in (\mM\times \mS)^{T+1}$ one has
\begin{equation}
\label{eq:LengthEnergy}
T \sum_{0 \leq i < T} \|s_{i+1} - s_i\|^2 \leq (1+4T /\lambda) \left[ T\sum_{0 \leq i < T} \|m_{i+1}-m_i\|^2 + \lambda\sum_{0 \leq i \leq T} \|m_i-s_i\|^2 \right].
\end{equation}
\end{lemma}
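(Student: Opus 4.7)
}

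The plan is to introduce the intermediate points $m_i, m_{i+1}$ between $s_i$ and $s_{i+1}$ via the telescoping decomposition
\begin{equation*}
s_{i+1}-s_i \;=\; (m_{i+1}-m_i) \;+\; \bigl[(s_{i+1}-m_{i+1}) + (m_i-s_i)\bigr],
\end{equation*}
and then spend one application of the Hilbert-space inequality \eqref{eq:SquareSumEps} to split the squared norm into a ``path'' term controlled by $\|m_{i+1}-m_i\|^2$ and an ``error'' term controlled by $\|m_i-s_i\|^2, \|m_{i+1}-s_{i+1}\|^2$. The free parameter $\ve$ in \eqref{eq:SquareSumEps} will be tuned at the end to match the factor $\lambda$ on the right-hand side of \eqref{eq:LengthEnergy}.

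Concretely, with $a := m_{i+1}-m_i$ and $b := (s_{i+1}-m_{i+1}) + (m_i-s_i)$, inequality \eqref{eq:SquareSumEps} gives
\begin{equation*}
(1+\ve)^{-1}\|s_{i+1}-s_i\|^2 \;\leq\; \|m_{i+1}-m_i\|^2 + \ve^{-1}\|b\|^2.
\end{equation*}
Next I would bound $\|b\|^2 \leq 2\|s_{i+1}-m_{i+1}\|^2 + 2\|m_i-s_i\|^2$ by the parallelogram estimate $\|x+y\|^2 \leq 2\|x\|^2+2\|y\|^2$. Multiplying by $T$ and summing for $0 \leq i < T$, each index $0 \leq j \leq T$ appears at most twice in the error sum, giving
\begin{equation*}
(1+\ve)^{-1} T\sum_{0 \leq i < T}\|s_{i+1}-s_i\|^2 \;\leq\; T\sum_{0\leq i < T}\|m_{i+1}-m_i\|^2 \;+\; 4T\ve^{-1}\sum_{0\leq i\leq T}\|m_i-s_i\|^2.
\end{equation*}

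Finally I would choose $\ve = 4T/\lambda$, so that $4T\ve^{-1}=\lambda$ matches the penalization in \eqref{eq:LengthEnergy}, and $(1+\ve) = 1+4T/\lambda$; multiplying through by $(1+\ve)$ yields the claim. There is no real obstacle here: the only mildly delicate point is bookkeeping the factor $4$ in $4T/\lambda$, which arises from the factor $2$ in the parallelogram bound for $\|b\|^2$ combined with each boundary term $\|m_i-s_i\|^2$ being counted in two consecutive summands.
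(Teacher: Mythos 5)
Your proof is correct and follows essentially the same route as the paper: one application of the Hilbert-space inequality \eqref{eq:SquareSumEps} to a telescoping decomposition of $s_{i+1}-s_i$, a parallelogram bound on the error term, summation over $i$, and the choice $\ve = 4T/\lambda$. In fact your version is the right-side-up one: the paper's proof as printed sets $a := s_{i+1}-s_i$ and $b := (m_{i+1}-m_i)-a$, which makes \eqref{eq:SquareSumEps} produce an upper bound on $\|m_{i+1}-m_i\|^2$ rather than on $\|s_{i+1}-s_i\|^2$; this is a typo, and the roles of $m$ and $s$ should be interchanged exactly as you do.
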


\begin{proof}
Let $0 \leq i < T$. Choosing $a:=s_{i+1}-s_i$, and $b := m_{i+1}-m_i -a$, we obtain
\begin{align*}
(1+\ve)^{-1}\|m_{i+1}-m_i\|^2 &\leq \|s_{i+1}-s_i\|^2 + \ve^{-1} \|(s_i-m_i)-(s_{i+1}-m_{i+1})\|^2\\
&\leq \|s_{i+1}-s_i\|^2 + 2 \ve^{-1} (\|s_i-m_i\|^2+\|s_{i+1}-m_{i+1}\|^2).
\end{align*}
Summing over $0 \leq i< T$ yields
\begin{equation*}
 (1+\ve)^{-1} \sum_{0 \leq i < T} \|m_{i+1}-m_i\|^2 \leq \sum_{0 \leq i < T} \|s_{i+1}-s_i\|^2 + 
2\ve^{-1} \sum_{0 \leq i \leq T} \alpha_i \|s_i-m_i\|^2,
\end{equation*}
 with $\alpha_0=\alpha_T=1$, $\alpha_i = 2$ otherwise. 
Choosing $\ve=4 T/\lambda$ concludes the proof.
\end{proof}

The second point of Proposition \ref{prop:Construction} is based on \eqref{eq:SquareSumEps} as well.
Indeed, let $(m_i)_{i=0}^T$ be minimizers of \eqref{eqdef:DiscreteOptimization}, let $0 \leq i < T$, and let $t =(i+\alpha)/T$ with $0 \leq \alpha\leq 1$. Then for any $\ve>0$ 
\begin{equation*}
	W_2^2(\eval_t \# \mu, \Leb) = \inf_{s \in \mS} \|(1-\alpha) m_i + \alpha m_{i+1} -s \|^2 \leq (1+\ve) \left(  \| \alpha(m_{i+1}-m_i)\|^2 + \ve^{-1} \inf_{s \in \mS} \|m_i-s\|^2\right)
\end{equation*}
Integrating over $t \in [0,1]$, using that either $\alpha \leq 1/2$ or $1-\alpha \leq 1/2$, and choosing $\ve=4T/\lambda$, we obtain as announced
\begin{equation*}
	\int_0^1 W_2^2(\eval_t \# \mu, \Leb) dt \leq \frac {1+\ve} T \sum_{0 \leq i < T} \left(\frac 1 4\|m_{i+1}-m_i\|^2 + \ve^{-1} \inf_{s \in \mS} \|m_i-s\|^2\right) = \frac 1 {4 T^2}\cE'(T,N,\lambda).
\end{equation*}
Finally, the convergence claim for the minimizing chain $(\mu_T)_{T \in \mN}$ results from classical arguments.
(i) The weak-* lower semi-continuity of the energy $\mu \mapsto \int_\Omega \cA(\omega) d \mu(\omega)$ on $\Prob(\Omega)$, which follows from the lower semi-continuity of the action $\cA : \Omega \to \mR_+ \cup \{\infty\}$.
(ii) The weak-* sequential compactness of $\{\mu \in \Prob(\Omega); \, \int_\Omega \cA(\omega) d \mu(\omega) \leq K\}$ for any constant $K$, see \cite{Brenier:1993wd}.
(iii) The weak-* continuity of $\mu \mapsto W_2^2( (\eval_0, \eval_1) \# \mu, (s_*,s^*) \# \Leb)$, a quantity bounded for $\mu_T$ by $\|m_0-s_*\|^2+\|m_T-s^*\|^2 \leq \cE(T,N_T,\lambda_T)/\lambda_T \to 0$ as $T \to \infty$. 
(iv) The weak-* lower semi-continuity of $\mu \mapsto \int_0^1 W_2^2(\eval_t \# \mu, \Leb) \, dt$, which follows from Fatou's lemma and the continuity of $\mu \mapsto W_2^2(\eval_t \# \mu, \Leb)$ for any $t \in [0,1]$. 


\section{Relaxed analysis}
\label{sec:Relaxed}

%

We prove Theorem \ref{th:UpperBound} (Relaxed estimate), using a quantization of the generalized flow minimizing the relaxed geodesic distance \eqref{eqdef:MinimizeRelaxed}. This quantization is a counterpart of the partition $\cP_N$ of the domain $(X, \Leb)$ used for the classical estimate \S \ref{subsec:ClassicalUpperBound}, which amounts to quantize the initial positions of the fluid particles.
Let $\delta_x$ denote the Dirac probability measure concentrated at a point $x$.

\begin{definition}
\label{def:Dimension}
	Let $\mH$ be a metric space, let $\mu$ be a probability measure on $\mH$, and let $\Gamma \subset \mH$. For all $N \geq 1$ denote, with $W_2$ the Wasserstein distance for the quadratic transportation cost
	\begin{align*}
		h_N(\mu) &:= \inf_{\omega \in \mH^N} W_2\biggl(\mu, \ \frac 1 N \sum_{1 \leq i \leq N} \delta_{\omega_i}\biggr), &
		r_N(\Gamma) &:= \inf_{\omega \in \mH^N} \min \biggl\{ r \geq 0; \ \Gamma \subset \bigcup_{1 \leq i \leq N} \overline B(\omega_i, r) \biggr\}.
	\end{align*}
	The quantization dimension of $\mu$, and the box dimension of $\Gamma$, are defined by 
	\begin{align*}
	D_{\rm quant}(\mu) &:= \limsup_{N \to \infty} \frac {\ln N}{-\ln h_N(\mu)},
	&
	D_{\rm box}(\Gamma) &:= \limsup_{N \to \infty} \frac {\ln N}{-\ln r_N(\Gamma)}. 
	\end{align*}
\end{definition}

The decay rate of $h_N$ is directly involved in the announced result Theorem \ref{th:UpperBound}. We estimate it using an elementary result of quantization theory, and refer to \cite{Gersho:1992wy}
 for more details on this rich subject.
Note that the (upper) box dimension $D_{\rm box}$ is a variant of the Haussdorff dimension, in which the set of interest if covered by balls of \emph{equal} radius. Box and Haussdorff dimension coincide for compact manifolds, but differ in general. For instance, all countable sets have Haussdorff dimension zero, whereas one can check that
\begin{align*}
	D_{\rm box} \left( ([0,1] \cap \mQ)^d \right) &= d, & D_{\rm box} \left(\left\{\frac 1 n;\, n \in \mN^*\right\}\right) &= \frac 1 2.
\end{align*}

\begin{proposition} 
\label{prop:Dimension}
Let $\mH$ be a metric space, and let $\mu \in \Prob(\mH)$ be supported on a set $\Gamma$. Then $D_{\rm quant}(\mu) \leq \max \{2,D_{\rm box}(\Gamma) \}$. More precisely for any $D>0$, one has as $N \to \infty$
	\begin{equation}
	\label{eq:DeltaHDecay}
		r_N(\Gamma) = \cO(N^{-\frac 1 D})
		\quad \Rightarrow \quad 
		h_N(\mu) = \cO
		\begin{cases} 
			N^{-\frac 1 D} &\text{ if } D>2,\\
			N^{-\frac 1 2} \sqrt{\ln N}  &\text{ if } D=2, \\ 
			N^{-\frac 1 2} &\text{ if } D<2.
		\end{cases}
	\end{equation}
\end{proposition}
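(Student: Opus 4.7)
The first claim $D_{\rm quant}(\mu) \leq \max\{2, D_{\rm box}(\Gamma)\}$ follows from the quantitative estimate \eqref{eq:DeltaHDecay} by taking logarithms in the definitions, so I focus on the rate. Since $r_1(\Gamma) < \infty$, I may assume $\Gamma$ bounded with diameter $1$ up to rescaling.

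The plan is a deterministic multi-scale quantization. Setting $\delta_k := 2^{-k}$, the hypothesis provides $M_k \lesssim \delta_k^{-D}$ as an upper bound on the covering number of $\Gamma$ at scale $\delta_k$. I would first build a nested family of partitions $\{\mathcal{Q}_k\}_{k=0}^{k^*}$ of $\Gamma$ where $\mathcal{Q}_0 = \{\Gamma\}$, each $\mathcal{Q}_k$ refines $\mathcal{Q}_{k-1}$, cells have diameter $\leq \delta_k$, and $|\mathcal{Q}_k| \leq M_k$; this is obtained by iteratively applying the covering hypothesis inside each parent cell, with depth $k^* \sim (\log_2 N)/D$ chosen so that $M_{k^*} \leq N < M_{k^*+1}$. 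Next, the $N$ target atoms are distributed down this hierarchy: $N$ atoms at the root, and at each refinement the $n_P$ atoms of a parent $P \in \mathcal{Q}_{k-1}$ are split among its children $A \subset P$ in integer proportions approximating $\mu(A)/\mu(P)$, preserving $\sum_{A \subset P} n_A = n_P$. The resulting ``fresh rounding'' $\epsilon_A^{(k)} := n_A - n_P\,\mu(A)/\mu(P)$ satisfies $|\epsilon_A^{(k)}| \leq 1$ and cancels within every parent, $\sum_{A \subset P} \epsilon_A^{(k)} = 0$. The quantization $\nu_N$ is then the uniform measure on any placement of the $n_A^{(k^*)}$ atoms inside each $A \in \mathcal{Q}_{k^*}$.

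The bound on $W_2^2(\mu, \nu_N)$ is obtained via an explicit scale-decomposed transport plan: (i) an intra-cell step at the finest level, sending $\mu|_A$ to a single point of $A$ for every $A \in \mathcal{Q}_{k^*}$, of cost $\lesssim \delta_{k^*}^2$; (ii) inter-sibling rebalancings at each coarser level $k$, moving only the fresh imbalance $|\epsilon_A^{(k)}|/N$ between children of a common parent of diameter $\leq \delta_{k-1}$, of total cost $\lesssim \delta_{k-1}^2 M_k/N$. Summing,
\begin{equation*}
W_2^2(\mu, \nu_N) \;\lesssim\; \delta_{k^*}^2 + \frac{1}{N} \sum_{k=1}^{k^*} \delta_{k-1}^2 M_k \;\lesssim\; N^{-2/D} + \frac{1}{N} \sum_{k=1}^{k^*} 2^{k(D-2)},
\end{equation*}
and a trichotomy on the sign of $D-2$ yields the three cases of \eqref{eq:DeltaHDecay} after taking square roots: the geometric series converges for $D<2$, giving the $\cO(N^{-1/2})$ rate; for $D=2$ it contributes $k^* \sim \ln N$ equal terms of size $1/N$; and for $D>2$ it is dominated by its last term $2^{k^*(D-2)} \sim N^{1-2/D}$, matching the intra-cell cost $\delta_{k^*}^2 \sim N^{-2/D}$.

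The main obstacle is to justify that at each level only the \emph{fresh} $\epsilon_A^{(k)}$ need be rebalanced, at the corresponding parent diameter $\delta_{k-1}$: the cancellation $\sum_{A \subset P} \epsilon_A^{(k)} = 0$ within every parent is what makes this localization possible, and without it a single-scale argument (transporting all discrepancy across the diameter of $\Gamma$) would only yield the weaker rate $\cO(N^{-2/(D+2)})$, losing the $N^{-1/D}$ rate for $D>2$. The secondary, more routine difficulty is the explicit construction of nested partitions satisfying the covering-number bound at all scales simultaneously, handled by applying the $r_N$-hypothesis inside each parent cell of the coarser partition.
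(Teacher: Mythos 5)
Your approach is genuinely different from the paper's, and the multi-scale / dyadic idea is sound in spirit, but as written there are two concrete gaps. \textbf{(1) Nested partitions with the right cardinality.} You need $\#\mathcal{Q}_k \lesssim M_k \sim \delta_k^{-D}$, but ``iteratively applying the covering hypothesis inside each parent cell'' does not give this: the global covering restricted to a parent $P$ still has $M_k$ pieces, so this construction only yields $\#\mathcal{Q}_k \leq \#\mathcal{Q}_{k-1}\cdot M_k$, which blows up. The covering bound $r_N(\Gamma)=\cO(N^{-1/D})$ is \emph{global} and does not imply a local (doubling-type) bound on how many $\delta_k$-balls a single parent cell needs. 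A correct construction exists — e.g.\ take a nested sequence of maximal $\delta_k$-separated sets $\mathcal{N}_0\subset\mathcal{N}_1\subset\cdots$, so that $\#\mathcal{N}_k$ is controlled by the covering number at scale $\delta_k/2$ via a packing argument, and then carefully arrange the Voronoi cells into a (near-)hierarchy — but this is a real technical step, not a one-liner. \textbf{(2) Additivity of $W_2^2$ over scales.} You sum the costs of the per-level rebalancings as if they acted on disjoint mass, but a given unit of mass is typically moved at several levels, and $W_2^2$ does not satisfy the triangle inequality. If one instead uses the triangle inequality for $W_2$ itself, the estimate $W_2(\sigma_{k-1},\sigma_k)\lesssim \sqrt{M_k/N}\,\delta_{k-1}$ gives the right rate for $D\neq 2$ but loses a $\sqrt{\ln N}$ factor at the critical $D=2$ (one gets $N^{-1/2}\ln N$ rather than $N^{-1/2}\sqrt{\ln N}$). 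The fix is to observe that a unit of mass first rebalanced at level $k$ travels a total distance $\lesssim \delta_{k-1}$ because the later displacements form a geometric tail; this does recover your stated bound, but you must say it — the written argument implicitly asserts a false subadditivity.

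The paper's proof sidesteps both issues with a much shorter greedy argument: it peels $1/N$ of the remaining mass off at each step $i=N,N-1,\dots,1$, each time into a ball of radius $2r_i(\Gamma)$ from an optimal $i$-point cover (such a ball with mass $\geq 1/N$ exists by pigeonhole, since the residual mass is $i/N$). The $N$ chunks of mass are genuinely disjoint, so the costs add exactly and one obtains $h_N(\mu)^2 \leq \tfrac{4}{N}\sum_{i=1}^N r_i(\Gamma)^2$ in a few lines; the trichotomy then follows from the behavior of $\sum_{i=1}^N i^{-2/D}$. Your approach, once the two gaps above are filled, is a valid alternative and is perhaps closer to the standard multi-scale quantization literature, but it is considerably more involved for the same conclusion.
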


\begin{proof}
	Let $N \in \mN$ be fixed. For each $1 \leq i \leq N$ let $M_i\subset \mH$ be a set of $i$ points such that $\Gamma \subset \cup_{\omega \in M_i} \overline B(\omega, 2 r_i)$, with $r_i := r_i(\Gamma)$. We construct a sequence of points $\omega_i \in \mH$, and an increasing sequence of measures $\rho_i$ supported on $\Gamma$ and of mass $i/N$, inductively starting with $i=N$ and finishing with $i=1$. Initialization: $\rho_N:=\mu$.
	
	Induction: for each $1 \leq i \leq N$, we construct $\omega_i$ and $\rho_{i-1}$ in terms of $\rho_i$. Let indeed $\omega_i \in M_i$ be such that $B_i := \overline B(\omega_i, 2r_i)$ satisfies $\rho_i(B_i) \geq 1/N$. Such a point exists since $|\rho_i| = i/N$, $\#(M_i)=i$, and $\supp(\rho_i) \subset \Gamma$. 
	Then let $\rho_{i-1} := \rho_i - \frac 1 {N\rho_i(B_i)} \rho_i$, so that $\rho_i - \rho_{i-1}$ is a non-negative measure of mass $\frac 1 N$ supported on $\overline B_i$. One has 	
	\begin{equation*}
		h_N(\mu)^2 \leq W_2^2\biggl(\mu, \frac 1 N \sum_{1 \leq i \leq N} \delta_{\omega_i}\biggr) \leq \sum_{1 \leq i \leq N} W_2^2 \biggl(\rho_i - \rho_{i-1}, \frac 1 N\delta_{\omega_i}\biggr) \leq \frac 1 N\sum_{1 \leq i \leq N} (2 r_i)^2.
	\end{equation*}
	The comparison \eqref{eq:DeltaHDecay} of the decay rates of $h_N(\mu)$ and $r_N(\Gamma)$ immediately follows. Finally the comparison of the dimensions follows from \eqref{eq:DeltaHDecay}.
\end{proof}

We now specialize the choice of $\mu$, $\Gamma$ and $\mH$.
Let $\mu \in \Prob(\Omega)$ be a generalized flow minimizing the relaxed geodesic distance \eqref{eqdef:MinimizeRelaxed}. This measure is concentrated on the set $\Gamma$ of paths obeying Newton's second law of motion
\begin{equation*}
	\Gamma := \{\omega \in C^2([0,1],X); \, \forall t \in [0,1], \ \ddot\omega(t) = -\nabla p(t,\omega(t))\},
\end{equation*}
where the pressure gradient $\nabla p : [0,1] \times X \to \mR^d$ is assumed, following the assumptions of Theorem \ref{th:UpperBound}, to have Lipschitz regularity.
We regard $\Gamma$ as embedded in the Hilbert space $\mH := H^1([0,1], \mR^d)$, which plays a natural role in the problem of interest \eqref{eqdef:MinimizeRelaxed} and is equipped with the norm
\begin{equation*}
	\| \omega\|^2_{\mH} := \left|\int_0^1 \omega\right|^2 + \int_0^1 |\dot \omega|^2. 
\end{equation*}
Note that $\mH$ continuously embeds in $C^0(\Omega, \mR^d)$, hence the evaluation maps $\eval_t : \mH \to \mR^d$ are continuous with a common Lipschitz constant denoted $C_{\eval}$.

\begin{lemma}
\label{lem:CauchyLipschitz}
	The set $\Gamma$ is compact. Furthermore the map $\Gamma \to X \times \mR^d : \omega \mapsto (\omega(0), \dot\omega(0))$ is bijective and bi-Lipschitz onto its image. 
\end{lemma}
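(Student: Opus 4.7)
The plan is to exploit the Cauchy--Lipschitz theorem applied to the second-order ODE $\ddot\omega = -\nabla p(t,\omega)$, using the standing assumption that $\nabla p$ is Lipschitz on $[0,1]\times X$. Denote its spatial Lipschitz constant by $L$ and $M := \|\nabla p\|_{L^\infty([0,1]\times X)}$. Injectivity of $\omega \mapsto (\omega(0),\dot\omega(0))$ follows immediately from the uniqueness part of Cauchy--Lipschitz, while surjectivity onto the image is tautological. The main work lies in the bi-Lipschitz estimate and in the boundedness of the image.

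For the bi-Lipschitz property, I would fix $\omega_1,\omega_2 \in \Gamma$, set $\eta := \omega_1-\omega_2$, and observe that the pointwise estimate $|\ddot\eta(t)| \leq L\,|\eta(t)|$ holds on $[0,1]$. A standard Grönwall argument applied to $|\eta(t)|^2+|\dot\eta(t)|^2$ then yields a two-sided bound
\begin{equation*}
c\,\bigl(|\eta(0)|+|\dot\eta(0)|\bigr) \;\leq\; \sup_{t\in[0,1]}\bigl(|\eta(t)|+|\dot\eta(t)|\bigr) \;\leq\; C\,\bigl(|\eta(0)|+|\dot\eta(0)|\bigr),
\end{equation*}
with constants depending only on $L$. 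The right-hand inequality directly controls $\|\eta\|_\mH$ in terms of the initial data. For the reverse direction, I would first recover $|\eta(0)|$ from $\|\eta\|_\mH$ using the continuous embedding $\mH \hookrightarrow C^0$ with constant $C_{\eval}$. To recover $|\dot\eta(0)|$, I would use that $\dot\eta$ is Lipschitz with constant $L\,\|\eta\|_\infty \leq L C_{\eval}\|\eta\|_\mH$ (from the ODE), so integrating $|\dot\eta(0)| \leq |\dot\eta(t)| + L C_{\eval}\|\eta\|_\mH$ against $t \in [0,1]$ and combining with $\int_0^1|\dot\eta| \leq \|\eta\|_\mH$ gives the required Lipschitz control.

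For compactness, it suffices by the bi-Lipschitz property to show that the image $I \subset X \times \mR^d$ is compact. Boundedness of $I$ in the position component is immediate since $X$ is compact. The velocity component must also be bounded: if $\omega \in \Gamma$, then $\omega(1) = \omega(0) + \dot\omega(0) + \int_0^1\!\int_0^s \ddot\omega(r)\,dr\,ds$, and since $|\ddot\omega| \leq M$ pointwise while $|\omega(1)-\omega(0)| \leq \diam(X)$, one obtains $|\dot\omega(0)| \leq \diam(X) + M/2$. Closedness of $I$ follows from continuous dependence on initial data: if $(x_n,v_n) \to (x,v)$ in $I$, the associated paths $\omega_n$ converge in $C^2$ to the unique solution $\omega$ of the ODE with initial data $(x,v)$, and since $\omega_n(t) \in X$ for the closed set $X$, the limit $\omega$ belongs to $\Gamma$. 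Hence $I$ is compact, and so is $\Gamma$ via the bi-Lipschitz identification.

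The step I expect to be the most delicate is the \emph{lower} bound $c(|\eta(0)|+|\dot\eta(0)|) \leq \|\eta\|_\mH$, because the $H^1$ norm a priori does not dominate pointwise values of the derivative; the argument genuinely uses that we are restricted to $\Gamma$, where the second derivative is controlled through the ODE. The a priori bound on the initial velocity, although elementary, is also essential and deserves emphasis since it is what makes the image bounded and hence the compactness statement nontrivial.
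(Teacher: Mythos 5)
Your proof is correct and takes the same route the paper gestures at: the paper's own proof is a single sentence ("The result follows from Cauchy--Lipschitz's theorem for ordinary differential equations, and the compactness of $X$"), and your write-up supplies exactly the details that sentence compresses — Gr\"onwall for the two-sided Lipschitz estimate, the a priori bound on initial velocities to get boundedness of the image, and continuous dependence for closedness.

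One small point worth flagging, which the paper also glosses over: in the closedness step you invoke the solution of the ODE with limit data $(x,v)$, but $\nabla p$ is only given on $[0,1]\times X$, so the flow map is not a priori defined on all of $X\times\mR^d$. This is harmless here because you only need the limit of the $C^2$-convergent sequence $\omega_n$ to satisfy the ODE and take values in the closed set $X$, which it does; but it is cleaner to phrase the argument as "$\Gamma$ is closed and equi-bounded in $C^2$, hence compact by Arzel\`a--Ascoli" rather than via extension of the flow. Either way the conclusion stands.
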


\begin{proof}
	The result follows from Cauchy-Lipschitz's theorem for ordinary differential equations, and the compactness of $X$. 
\end{proof}

The image of the generalized flow $\mu$ by the map of Lemma \ref{lem:CauchyLipschitz}, namely initial position and speed, is often called a minimal measure \cite{Bernot:2009ed}.
Since there is no ambiguity, we  denote $h_N := h_N(\mu)$. The constants $c,C,C'$ appearing in the estimates below only depend on the dimension $d$.

\begin{corollary}
\label{prop:IntermediateDimension}
One has $h_N =\cO(N^{-\frac 1 {2d}})$  (resp.\ $\cO(N^{-\frac 1 2} \sqrt{\ln N})$ if $d=1$.)
\end{corollary}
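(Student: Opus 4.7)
The plan is to combine Lemma \ref{lem:CauchyLipschitz} with Proposition \ref{prop:Dimension}, by showing that the set $\Gamma$ has box dimension at most $2d$ and then reading off the corresponding decay rate of $h_N$.

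First, I would bound the initial velocity of any path $\omega \in \Gamma$. Since $\nabla p$ is Lipschitz on the compact set $[0,1]\times X$, it is in particular uniformly bounded, say by $M$, so that $|\ddot\omega(t)| \leq M$ for all $t \in [0,1]$. Integrating twice and using $\omega(0), \omega(1) \in X$ yields
\begin{equation*}
|\dot\omega(0)| \leq |\omega(1) - \omega(0)| + \tfrac{1}{2} \|\ddot\omega\|_\infty \leq \diam(X) + M/2 =: R.
\end{equation*}
Hence the bijection $\omega \mapsto (\omega(0),\dot\omega(0))$ of Lemma \ref{lem:CauchyLipschitz} sends $\Gamma$ into the bounded set $X \times \overline B(0,R) \subset \mathbb{R}^{2d}$, and this map is bi-Lipschitz onto its image.

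Second, I would estimate $r_N(\Gamma)$. The bounded set $X \times \overline B(0,R) \subset \mathbb{R}^{2d}$ can clearly be covered by $N$ Euclidean balls of radius $\cO(N^{-1/(2d)})$ (by partitioning a bounding box into $\lfloor N^{1/(2d)}\rfloor^{2d}$ congruent subcubes). Transporting this cover back through the bi-Lipschitz map gives $r_N(\Gamma) = \cO(N^{-1/(2d)})$ in $\mH$. In particular, $D_{\rm box}(\Gamma) \leq 2d$.

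Finally, applying Proposition \ref{prop:Dimension} with $D = 2d$: when $d \geq 2$ we have $D > 2$, yielding $h_N(\mu) = \cO(N^{-1/(2d)})$; when $d = 1$ we have $D = 2$, yielding $h_N(\mu) = \cO(N^{-1/2}\sqrt{\ln N})$. This is exactly the claim. The argument is entirely mechanical once the velocity bound is in hand; the only mild subtlety is verifying that bi-Lipschitz equivalence indeed transports the cover-number estimate, but this is immediate since dilating radii by the Lipschitz constants only changes constants and not the exponent.
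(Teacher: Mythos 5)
Your proof is correct and follows essentially the same route as the paper: invoke Lemma~\ref{lem:CauchyLipschitz} to get a bi-Lipschitz identification of $\Gamma$ with a compact subset of $\mR^{2d}$, bound $r_N(\Gamma)=\cO(N^{-1/(2d)})$, and conclude via Proposition~\ref{prop:Dimension}. The only difference is that you explicitly re-derive the boundedness of $\{(\omega(0),\dot\omega(0)):\omega\in\Gamma\}$ by integrating the ODE, whereas the paper takes the compactness of $K$ directly from Lemma~\ref{lem:CauchyLipschitz}.
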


\begin{proof}
By Lemma \ref{lem:CauchyLipschitz}, the set $\Gamma$ is in bi-Lipschitz bijection with a compact set $K \subset \mR^{2d}$. Hence $r_N(\Gamma) \leq C r_N(K) \leq C' N^{-\frac 1 {2d}}$, and the upper estimate follows from \eqref{eq:DeltaHDecay}.
\end{proof}

The quantization scale $h_N$ is also bounded below, and is minimal for classical solutions. 

\begin{lemma}
\label{lem:LowerBound}
There exists $c>0$ such that $h_N \geq c N^{-\frac 1 d}$ for all $N>0$. If the generalized flow $\mu$ in fact represents a classical solution $s$ to Euler's equations, and $\nabla \dot s$ is bounded on $[0,1] \times X$, then this lower estimate is sharp: $h_N = \cO(N^{-\frac 1 d})$.
\end{lemma}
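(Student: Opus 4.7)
The plan is to establish the two bounds by separate routes: the universal lower bound by pushing quantizers through the Lipschitz evaluation $\eval_0$, and the sharpness in the classical case by lifting the spatial partition $\cP_N$ into the path space.

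For the lower bound, observe that for any $N$-atomic probability measure $\nu = \frac 1 N \sum_{i=1}^N \delta_{\omega_i}$ on $\mH$, the pushforward $\eval_0 \# \nu$ is an $N$-atomic probability measure on $X$. The incompressibility constraint at $t=0$ gives $\eval_0 \# \mu = \Leb$, and since $\eval_0 : \mH \to \mR^d$ is $C_{\eval}$-Lipschitz, the contraction of Wasserstein distance under Lipschitz maps yields
\begin{equation*}
W_2(\Leb, \eval_0 \# \nu) \leq C_{\eval} \, W_2(\mu, \nu).
\end{equation*}
A classical quantization lower bound (valid since $X \subset \mR^d$ has positive Lebesgue measure) states that $W_2(\Leb, \eta) \geq c_0 N^{-1/d}$ for every probability measure $\eta$ on $X$ supported by at most $N$ points. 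Taking the infimum over $\nu$ delivers $h_N(\mu) \geq c N^{-1/d}$ with $c := c_0 / C_{\eval}$.

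For the sharpness in the classical case, the generalized flow is deterministic: $\mu = \Phi \# \Leb$ with $\Phi : X \to \mH$ defined by $\Phi(x) := s(\cdot, x)$. Given the partition $\cP_N$ of $X$ into regions $P$ of diameter $\leq C_\cP N^{-1/d}$, pick an arbitrary representative $x_P \in P$ in each cell and consider the quantizer $\nu_N := \frac 1 N \sum_{P \in \cP_N} \delta_{\Phi(x_P)}$. The transport plan sending $\Phi(x)$ to $\Phi(x_P)$ for $x \in P$ provides
\begin{equation*}
h_N(\mu)^2 \leq W_2^2(\mu, \nu_N) \leq \sum_{P \in \cP_N} \int_P \|\Phi(x) - \Phi(x_P)\|_{\mH}^2 \, dx,
\end{equation*}
so it suffices to show that $\Phi$ is Lipschitz from $X$ to $\mH$. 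Expanding $\|\Phi(x)-\Phi(y)\|_{\mH}^2 = \bigl|\int_0^1 (s(t,x)-s(t,y))\,dt\bigr|^2 + \int_0^1 |\dot s(t,x)-\dot s(t,y)|^2\,dt$ reduces the question to uniform-in-$t$ spatial Lipschitz bounds on $s(t,\cdot)$ and on $\dot s(t,\cdot)$.

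The main technical point is therefore to convert the hypothesis on $\nabla \dot s$ into a spatial Lipschitz bound on $s(t, \cdot)$ itself, since only $\dot s$ is directly controlled. Starting from the Lipschitz regularity of the boundary datum $s_* = s(0, \cdot)$ assumed in Theorem \ref{th:UpperBound} (Relaxed estimate), the identity $\nabla s(t, x) = \nabla s_*(x) + \int_0^t \nabla \dot s(\tau, x)\,d\tau$ provides a uniform spatial Lipschitz bound on $s(t, \cdot)$, while boundedness of $\nabla \dot s$ directly furnishes the bound on $\dot s(t, \cdot)$. Inserting these into the $\mH$-norm and summing over $P \in \cP_N$ produces $h_N(\mu) \leq L C_\cP N^{-1/d}$, where $L$ depends only on the Lipschitz constant of $s_*$ and on $\|\nabla \dot s\|_\infty$, which concludes the proof.
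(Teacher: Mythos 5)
Your proof follows the same route as the paper's: the lower bound by Lipschitz contraction of the Wasserstein distance under $\eval_0$, and the sharpness by pushing the spatial partition $\cP_N$ forward under the map $\Phi : x \mapsto s(\cdot,x)$. The only difference is that the paper simply asserts that $\Phi$ is Lipschitz, whereas you unpack the $\mH$-norm and derive the Lipschitz constant from the boundedness of $\nabla\dot s$ together with the Lipschitz regularity of the initial datum $s_*$ — a useful elaboration of a step the paper leaves implicit (and implicitly relies on the normalization $s_* = \Id$ adopted in the relaxed analysis).
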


\begin{proof}
Since $X$ is a $d$-dimensional domain, there exists $c>0$ such that $W_2(\Leb, \nu_N) \geq c N^{-\frac 1 d}$ for any measure $\nu_N$ supported at $N$ points of $\mR^d$. (Recall that, in this paper, $\Leb$ denotes the Lebesgue measure restricted to the set $X$, and normalized for unit mass.)
The first point follows: for any measure $\mu_N$ supported at $N$ points of $\mH$
\begin{equation*}
	cN^{-\frac 1 d} \leq W_2(\Leb, \eval_0 \# \mu_N) = W_2(\eval_0\# \mu, \eval_0 \# \mu_N)\leq C_{\eval} W_2(\mu, \mu_N) = C_{\eval} h_N.
\end{equation*}
Second point: for each $x \in X$, let $\omega_x : t \mapsto s(t,x)$. Then $\Phi : (X, \Leb) \to (\Gamma, \mu) : x \mapsto \omega_x$ is measure preserving and Lipschitz, with regularity constant denoted $C_\Phi$. 
Let $\nu_N$ be a discrete probability measure, with one Dirac mass of weight $1/N$ in each region of the partition $\cP_N$. Since these regions have diameter $\leq C_\cP N^{-\frac 1 d}$, we conclude that 
\begin{equation*}
h_N \leq W_2(\mu, \Phi\# \nu_N) = W_2(\Phi\# \Leb, \Phi\# \nu_N) \leq C_\Phi W_2(\Leb, \nu_N) \leq C_\Phi C_\cP N^{-\frac 1 d}.
	\qedhere
\end{equation*}
\end{proof}

In the rest of this section, we fix the integer $N$ and allow ourselves a slight abuse of notation: elements $\omega_j, P_j, \rho_j, \ldots$ indexed by $1 \leq j \leq N$ do implicitly depend on $N$, although that second index $\omega_j^N, P_j^N, \rho_j^N, \ldots$ is omitted for readability. 
\begin{lemma}
\label{lem:ExistsOptimal}
	The infimum defining $h_N$ is attained, see Definition \ref{def:Dimension}.
	As a result there exists $(\omega_j)_{j=1}^N\in \mH^N$ and probability measures $(\rho_j)_{j=1}^N$ on $\Gamma$ such that 
	\begin{align}
	\label{eq:Decomp}
		\mu &= \frac 1 N\sum_{1 \leq j \leq N} \rho_j & h_N^2 &= \frac 1 N \sum_{1 \leq j \leq N} \int_{\Gamma} \|\omega-\omega_j\|^2_\mH \, d\rho_j(\omega)
	\end{align}
	Furthermore, $\omega_j$ is the barycenter of $\rho_j$ for each $1 \leq j \leq N$. 
\end{lemma}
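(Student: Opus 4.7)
The plan is to recast $h_N^2$ as a single joint infimum over $N$ centers $\omega_j \in \mH$ and a mass-partition $(\rho_j)$ of $\mu$, then apply the direct method on the compact convex hull of $\Gamma$. I would first observe that, for any centers $(\omega_j) \in \mH^N$, a coupling of $\mu$ and $\frac{1}{N}\sum_j \delta_{\omega_j}$ writes (by disintegration with respect to the second marginal) as $\pi = \sum_j \rho_j \otimes \delta_{\omega_j}$, where $\rho_j \geq 0$ is a measure on $\Gamma$ of mass $1/N$ and $\sum_j \rho_j = \mu$; the transport cost then becomes $\sum_j \int \|\omega - \omega_j\|_\mH^2\,d\rho_j(\omega)$. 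Conversely any such pair defines an admissible coupling, yielding the reformulation
\begin{equation*}
	h_N^2 = \inf\Bigl\{\textstyle \sum_j \int_\Gamma \|\omega-\omega_j\|_\mH^2\,d\rho_j(\omega) \,:\, \omega_j \in \mH,\ \rho_j \geq 0,\ |\rho_j|=\tfrac 1N,\ \sum_j\rho_j=\mu\Bigr\},
\end{equation*}
so that the decomposition \eqref{eq:Decomp} will follow as soon as this infimum is shown to be attained.

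For fixed $(\rho_j)$, the quadratic form $\omega_j \mapsto \int \|\omega-\omega_j\|_\mH^2\, d\rho_j(\omega)$ is uniquely minimized at the barycenter $\omega_j^\star := N\int \omega\,d\rho_j(\omega) \in \overline{\mathrm{conv}}(\Gamma)$, which yields the last claim of the lemma and allows me to restrict the search to $\omega_j \in K := \overline{\mathrm{conv}}(\Gamma)$. I would then apply the direct method: by Lemma \ref{lem:CauchyLipschitz}, $\Gamma$ is compact in $\mH$, hence by Mazur's theorem so is $K$; the set $\Prob(\Gamma)$ is weak-* sequentially compact by Prokhorov's theorem. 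From a minimizing sequence $(\omega_j^{(k)},\rho_j^{(k)})_k$ I extract subsequences such that $\omega_j^{(k)} \to \omega_j^\infty$ in $\mH$ and $\rho_j^{(k)} \rightharpoonup^* \rho_j^\infty$, the constraints $|\rho_j^\infty|=1/N$ and $\sum_j \rho_j^\infty = \mu$ being preserved by weak-* continuity. Expanding $\|\omega-\omega_j\|_\mH^2 = \|\omega\|_\mH^2 - 2\langle \omega,\omega_j\rangle_\mH + \|\omega_j\|_\mH^2$, the first term converges by weak-* testing against the continuous bounded function $\omega \mapsto \|\omega\|_\mH^2$ on compact $\Gamma$, the second by pairing the weakly convergent barycenter $\int \omega\,d\rho_j^{(k)}$ against the strongly convergent $\omega_j^{(k)}$, and the third by norm continuity.

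The main obstacle I anticipate is the compactness of $K$ in the infinite-dimensional Hilbert space $\mH$: this is not automatic and relies precisely on Mazur's theorem that the closed convex hull of a compact subset of a Banach space remains compact. Once this, together with the compactness of $\Gamma$ furnished by Lemma \ref{lem:CauchyLipschitz}, is in place, the remainder amounts to a routine direct-method argument, and the barycenter characterization of $\omega_j^\star$ from the second step then gives the final assertion of the lemma.
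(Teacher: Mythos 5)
Your proposal is correct and follows essentially the same route as the paper's proof: both reduce to minimizing over the compact set $K=\overline{\mathrm{conv}}(\Gamma)$ via the barycenter observation, both invoke compactness of $\Gamma$ (Lemma~\ref{lem:CauchyLipschitz}) and Mazur's theorem to get compactness of $K$, and both derive the decomposition \eqref{eq:Decomp} by disintegrating an optimal transport plan. The only difference is cosmetic: the paper treats $(\omega_j)_j\mapsto W_2^2(\tfrac1N\sum_j\delta_{\omega_j},\mu)$ directly as a continuous function on the compact $K^N$ and concludes at once, whereas you unfold the coupling into the pair $(\omega_j,\rho_j)_j$ and run a joint direct-method argument (also needing Prokhorov for weak-* compactness of the $\rho_j$ and a term-by-term passage to the limit), which is a little longer but amounts to the same thing.
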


\begin{proof}
Let $(\omega_j)_{j=1}^N$ be a candidate quantization, and let $\pi$ be the transport plan associated to $W_2^2(\frac 1 N \sum_{j=1}^N \delta_{\omega_j},\mu)$. Then the measures $\rho_j : A \mapsto N\, \pi(\{x_j\} \times A)$, $1 \leq j \leq N$, are probabilities which average to $\mu$, and the transport cost is the RHS of \eqref{eq:Decomp}. 
The quantization energy, i.e. the squared Wasserstein distance, is  decreased by replacing $\omega_j$ with the barycenter $b_j$ of $\rho_j$, $1 \leq j \leq N$, by the amount $\frac 1 N \sum_{j=1}^N |\omega_j-b_i|^2$. Hence $\omega_j = b_j$ for all $1 \leq j \leq N$ if the quantization is optimal. Note also that the barycenter of $\rho_j$ belongs to $G := \overline {\Hull(\Gamma)}$ by construction.

Since $\Gamma$ is a compact subset of a Hilbert space, the convex hull closure $G$ is also compact, for the strong topology induced by $\|\cdot\|_\mH$. 
The quantization energy $(\omega_j)_{j=1}^N \mapsto W_2^2(\frac 1 N \sum_{j=1}^N \delta_{\omega_j},\mu)$ attains its minimum on $G^N$ by compactness, and by the previous argument it is the global minimum on $\mH^N$.
\end{proof}
	
Let $\mu_N$ denote the equidistributed probability on the set $\{\omega_j;\ 1 \leq j \leq N\}$ of Lemma \ref{lem:ExistsOptimal}.

\begin{lemma}
\label{lem:Indexation}
The regions of the partition $\cP_N$ of $\Omega$ can be indexed as $(P_j)_{j=1}^N$ in such way that 
\begin{equation}
\label{eq:Indexation}
C h_N^2 \geq  \sum_{1 \leq j \leq N} \int_{P_j} |\omega_j(0) - x|^2 dx.
\end{equation}
\end{lemma}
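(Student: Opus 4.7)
The plan is to reduce the statement to a discrete optimal matching between $N$ representatives of the partition regions $(P_j)_{j=1}^N$ and the $N$ initial positions $(\omega_j(0))_{j=1}^N$. I would first pick any representative $z_j \in P_j$ for each $j$; since $P_j$ has diameter at most $C_\cP N^{-1/d}$ and mass $1/N$, the transport of $\Leb|_{P_j}$ onto $\delta_{z_j}$ yields
\begin{equation*}
W_2\Big(\Leb,\, \tfrac{1}{N}\sum_{j=1}^N \delta_{z_j}\Big)^2 \leq C_\cP^2 N^{-2/d} \leq C h_N^2,
\end{equation*}
where the second inequality uses the lower bound $h_N \geq c N^{-1/d}$ of Lemma \ref{lem:LowerBound}.

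Next I would exploit the incompressibility identity $\eval_0 \# \mu = \Leb$ together with the fact that $\eval_0 : \mH \to \mR^d$ is Lipschitz with constant $C_{\eval}$. Pushing $\mu$ and the optimal quantization measure $\mu_N$ from Lemma \ref{lem:ExistsOptimal} forward through $\eval_0$ gives $W_2(\Leb, \tfrac{1}{N}\sum_j \delta_{\omega_j(0)}) \leq C_{\eval} h_N$. Combined with the previous estimate by the triangle inequality, this yields
\begin{equation*}
W_2\Big(\tfrac{1}{N}\sum_{j=1}^N \delta_{z_j},\, \tfrac{1}{N}\sum_{j=1}^N \delta_{\omega_j(0)}\Big) \leq C' h_N.
\end{equation*}
Since both measures are uniform on $N$ points, this squared Wasserstein distance equals the assignment value $\frac{1}{N}\min_\sigma \sum_j |z_j - \omega_{\sigma(j)}(0)|^2$ ranging over permutations $\sigma$ of $\{1,\ldots,N\}$. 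Selecting an optimal $\sigma$ and reindexing the partition by $P_j \leftarrow P_{\sigma(j)}$ (and $z_j$ accordingly), I would obtain $\sum_j |z_j - \omega_j(0)|^2 \leq C'^2 N h_N^2$.

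To conclude, I would split the target integral using $|a+b|^2 \leq 2|a|^2 + 2|b|^2$:
\begin{equation*}
\sum_{j=1}^N \int_{P_j} |\omega_j(0) - x|^2 dx \leq \frac{2}{N}\sum_{j=1}^N |\omega_j(0) - z_j|^2 + 2 \sum_{j=1}^N \int_{P_j} |x - z_j|^2 dx,
\end{equation*}
where the first term is already bounded by $2C'^2 h_N^2$, and the second by $2 C_\cP^2 N^{-2/d} \leq C'' h_N^2$ via Lemma \ref{lem:LowerBound}.

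The main obstacle is conceptual rather than technical: one must recognize that the quantization bound $h_N$, a priori measured in the path-space $\mH$, transfers down to a Wasserstein bound on $X$ through the Lipschitz evaluation map $\eval_0$, and that the partition scale $N^{-1/d}$ is itself absorbed into $h_N$ thanks to the a priori lower bound of Lemma \ref{lem:LowerBound}. Once these two ingredients are in place, the remaining step is the textbook reduction of a $W_2$ distance between uniform discrete measures to a discrete assignment problem.
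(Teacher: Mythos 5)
Your proof is correct and follows essentially the same route as the paper: transfer the $\mH$-quantization error to $X$ via the Lipschitz evaluation map $\eval_0$ using $\eval_0\#\mu = \Leb$, absorb the partition scale $N^{-1/d}$ into $h_N$ via the lower bound of Lemma~\ref{lem:LowerBound}, and extract the indexation from an optimal discrete assignment. The only cosmetic difference is that the paper takes the barycenters $b_j$ of the regions as the representatives, which turns your $|a+b|^2 \le 2|a|^2+2|b|^2$ splitting into an exact Huygens-type identity $\int_{P_j}|\omega_j(0)-x|^2\,dx = \int_{P_j}|b_j-x|^2\,dx + \tfrac 1 N|\omega_j(0)-b_j|^2$ and spares a factor of $2$.
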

\begin{proof}
Let $B_N \subset \Omega$ collect the barycenters of the partition $\cP_N$, and let $\nu_N$ denote the equidistributed probability on $B_N$. 
	One has 
	\begin{align*}
		W_2(\nu_N, \Leb) &\leq C_\cP N^{-\frac 1 d}, & W_2(\Leb, \eval_0 \# \mu_N) &\leq C_{\eval} W_2(\mu, \mu_N) = C_{\eval} h_N.
	\end{align*}
	Thus $W_2(\nu_N, \eval_0 \# \mu_N) \leq C_1 h_N$ by Lemma \ref{lem:LowerBound}. This optimal transport problem between the discrete measures $\nu_N$ and $\eval_0 \# \mu_N$ determines an optimal assignment $\Gamma_N \to B_N$, represented by the indexation $(b_j)_{j=1}^N$ of $\Gamma_N$ and $B_N$. Denoting by $P_j \in \cP_N$ the region of which $b_j$ is the barycenter we conclude that
	\begin{equation*}
		\sum_{1 \leq j \leq N} \int_{P_j} |\omega_j(0)-x|^2 dx = \sum_{1 \leq j \leq N} \int_{P_j} |b_j-x|^2 dx + W_2^2(\nu_N, \eval_0 \# \mu_N) \leq C h_N^2.
		\qedhere
	\end{equation*}
\end{proof}

For each $0 \leq i \leq T$, let $m_i \in \mN$ be the piecewise constant map on the partition $\cP_N$ defined by 
\begin{equation*}
\forall 1 \leq j \leq N, \ \forall x \in P_j, \ m_i(x) := \omega_j(i/T).
\end{equation*}

\paragraph{Bound on the energy terms $\|m_{i+1}-m_i\|$.}
Using Cauchy-Schwartz's inequality we obtain
\begin{align*}
&T \sum_{0 \leq i <T} \|m_{i+1}-m_i\|^2 = \frac 1 N \sum_{1 \leq j \leq N} T \sum_{0 \leq i < T}  \left|\omega_j\biggl(\frac{i+1} T\biggr)-\omega_j\biggl(\frac i T\biggr)\right|^2 
 \leq \frac 1 N\sum_{1 \leq j \leq N} \int_0^1 | \dot \omega_j(t)|^2 dt \\
 & = \frac 1 N \sum_{1 \leq j \leq N} \int_0^1 \left| \int_\Gamma \dot \omega \, d \rho_j(\omega)\right|^2
 \leq \frac 1 N \sum_{1 \leq j \leq N} \int_0^1 \int_\Gamma |\dot \omega|^2 \, d \rho_j(\omega) \, dt
 = d^2(s_*,s^*).
\end{align*}
\paragraph{Distance to incompressible maps.}
For any $1 \leq i \leq T$, with $t := i/T$, one has
\begin{equation*}
\inf_{s \in \mS} \|m_i - s\| = W_2(\Leb, \eval_t \# \mu_N) = W_2(\eval_t \# \mu, \eval_t \# \mu_N) \leq C_{\eval} W_2(\mu, \mu_N) = C_{\eval} h_N.
\end{equation*}

\paragraph{Boundary conditions.}
We make the assumption that $s_* = \Id$, up to a minor modification of Lemma \ref{lem:Indexation} (replace $x$ with $s_*(x)$ in \eqref{eq:Indexation}). Lemma \ref{lem:Indexation} then precisely states that $\|m_0 - s_*\|^2 \leq C h_N^2$, and the generalized boundary condition of \eqref{eqdef:MinimizeRelaxed} states that $\omega(1)=s^*(\omega(0))$ for $\mu$-almost every $\omega \in \Gamma$. Denoting by $C_0$ the Lipschitz regularity constant of $s^*$ we obtain for any $1 \leq j \leq N$ and $x \in X$
\begin{align*}
&|\omega_j(1) - s^*(x)|^2 = \biggl| \int_\Gamma s^*(\omega(0)) - s^*(x)\, d \rho_j(\omega)\biggr|^2 \leq \int_\Gamma |s^*(\omega(0))-s^*(x)|^2 \, d \rho_j(\omega) \\
& \leq C_0^2 \int_\Gamma |\omega(0)-x|^2 \, d \rho_j(\omega) 
 \leq 2 C_0^2 \left( \int_\Gamma |\omega_j(0)-\omega(0)|^2 \, d \rho_j(\omega) + |\omega_j(0)-x|^2 \right).
\end{align*}
Therefore 
\begin{align*}
	\| m_T-s^*\|^2 &= \sum_{1 \leq j \leq N} \int_{P_j} |\omega_j(1) - s^*(x)|^2 \, dx \\
	& \leq 2 C_0^2 \sum_{1 \leq j \leq N} \left(\frac 1 N\int_\Gamma |\omega_j(0)-\omega(0)|^2\, d \rho_j(\omega) + \int_{P_j} | \omega_j(0) - x|^2 \, dx\right)\\
	& \leq 2C_0^2 (C_{\eval}^2 W_2^2 (\mu_N, \mu) + \| m_0-s_*\|^2)  \leq C h_N^2.
\end{align*}
\paragraph{Summation and final estimate.}
The value $\cE(T,N,\lambda)$ of the minimum \eqref{eqdef:DiscreteOptimization} is 
\begin{equation*}
T \sum_{0 \leq i < T} \|m_{i+1}-m_i\|^2+\lambda \left(\|m_0-s_*\|^2+\|m_T-s^*\|^2+\sum_{1 \leq i < T} \inf_{s \in \mS} \|m_i-s\|^2\right) \leq d^2(s_*,s^*)+\cO(T h_N^2 \lambda).
\end{equation*}

\section{Numerical experiments}
\label{sec:Numerics}

\subsection{Minimization algorithm and choice of penalization}
We rely on a quasi-Newton method to compute a (local) minimum of the
discretized problem \eqref{eqdef:DiscreteOptimization}. This means
that we need to compute the value of the functional
\begin{equation}
  m \in \mM_N^{T+1} \mapsto T \sum_{0 \leq i < T} \|m_{i+1}-m_i\|^2 +
  \lambda\biggl(\|m_0-s_*\|^2 + \|m_T-s^*\|^2 + \sum_{1 \leq i < T}
  d^2_\mS(m_i) \biggr).
\label{eq:DiscreteEnergy}
\end{equation} and its gradient, where $d^2_\mS(m) = \inf_{s \in
  \mS}\|m-s\|^2$. The only difficulty is to evaluate the squared
distance $d^2_\mS$ to the set of measure-preserving vector fields and its gradient.  As
explained in the introduction, Brenier's Polar Factorization Theorem
implies that for any vector valued function $m\in\mM$,
$$ d^2_\mS(m) = W^2_2(m\#\Leb, \Leb).$$
When $m$ belongs to $\mM_N$, the measure $m\#\Leb$ is finitely
supported, and the computation of the Wasserstein distance can be
performed using a semi-discrete optimal transport solver \cite{Aurenhammer:1998ie,Merigot:2011js,Levy:2014un}. The
next proposition gives an explicit formulation for the gradient in
term of the optimal transport plan.  Recall that $\mM_N$ is the set of
piecewise constant functions on the tessellation $\cP_N := (P_j)_{1\leq
  j\leq N}$ of $X$. The diagonal $\mD_N$ in $\mM_N$ is the set of
functions $m$ in $\mM_N$ such that $m(P_j) = m(P_k)$ for some $j\neq
k$. The set $\mM_N\setminus \mD_N$ is a dense open set in $\mM_N$.

\begin{proposition} The functional $d^2_{\mS}$ is differentiable
  almost everywhere on $\mM_N$ and continuously differentiable on
  $\mM_N\setminus \mD_N$. The gradient of $d^2_{\mS}$ at $m \in \mM_N
  \setminus \mD_N$ is explicit: with $x_j = m(P_j)$,
\begin{equation}
  \left.\nabla d^2_{\mS}(m)\right|_{P_j} = 
  2 (x_j - \bary(T^{-1}(x_j)))
\end{equation}
where $T:X\to m(X)$ is the piecewise constant optimal transport map
between $\Leb$ and the finitely supported measure $m\#\Leb$ and
$\bary(S) = \int_S x dx / \Leb(S)$ is the isobarycenter of $S$
\end{proposition}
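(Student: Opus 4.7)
The strategy is to combine Brenier's polar factorization with the semi-discrete Kantorovich dual \eqref{eq:KantorovitchDiscrete} and an envelope-theorem argument. Parameterize $m \in \mM_N$ by its constant values $x_j := m(P_j) \in \mR^d$ for $1 \leq j \leq N$; then $m \# \Leb = \frac{1}{N}\sum_{j=1}^N \delta_{x_j}$, and $m \in \mM_N \setminus \mD_N$ is equivalent to saying that the sites $(x_j)_{j=1}^N$ are pairwise distinct.

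\textbf{Envelope-theorem computation of the gradient.} Combining \eqref{eq:DistanceToIncompressibles} and \eqref{eq:KantorovitchDiscrete} gives
\begin{equation*}
d^2_\mS(m) = \max_{f \in \mR^N} \Phi(x,f), \qquad \Phi(x,f) := \frac{1}{N}\sum_j f_j + \int_X \min_k \bigl(|y-x_k|^2 - f_k\bigr)\, dy.
\end{equation*}
For $m \notin \mD_N$, the standard analysis of semi-discrete optimal transport \cite{Aurenhammer:1998ie,Merigot:2011js,Levy:2014un} shows that the maximizer $f^*(m)$ is unique up to an additive constant, that the Laguerre cells $L_j(m) := \{y : |y-x_j|^2 - f_j^* \leq |y-x_k|^2 - f_k^*\ \forall k\}$ all have Lebesgue measure exactly $1/N$, and that $m \mapsto f^*(m)$ is of class $C^1$. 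The envelope theorem then eliminates the implicit dependence through $f^*$:
\begin{equation*}
\partial_{x_j} d^2_\mS(m) = \partial_{x_j} \Phi(x,f^*(m)) = \int_{L_j(m)} 2(x_j - y)\, dy = \frac{2}{N}\bigl(x_j - \bary(L_j(m))\bigr).
\end{equation*}

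\textbf{Conversion to the $L^2$ gradient and identification of $T^{-1}(x_j)$.} For any variation $\delta m \in \mM_N$ with values $\delta x_j := \delta m(P_j)$, one has $\|\delta m\|^2 = \frac{1}{N}\sum_j |\delta x_j|^2$, so the $L^2$ gradient must satisfy $\nabla d^2_\mS(m)|_{P_j} = N \cdot \partial_{x_j} d^2_\mS(m) = 2(x_j - \bary(L_j(m)))$. Since the optimal transport map sends each cell $L_j(m)$ to the single point $x_j$, we have $T^{-1}(x_j) = L_j(m)$, which is the announced formula.

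\textbf{Regularity and main obstacle.} Continuous differentiability on $\mM_N \setminus \mD_N$ follows from the $C^1$ dependence of the Laguerre cells $L_j(m)$ on the sites, which transfers to continuous dependence of their barycenters. The set $\mD_N$ is the finite union of the affine subspaces $\{m \in \mM_N : m(P_j) = m(P_k)\}$ over the pairs $j \neq k$, hence has Lebesgue measure zero in $\mM_N$, yielding differentiability almost everywhere. The main technical obstacle is the envelope step, which relies on the uniqueness and $C^1$ selection of $f^*(m)$ on $\mM_N \setminus \mD_N$: this rests on the strict concavity modulo constants of $f \mapsto \Phi(x,f)$, whose Hessian is expressed in terms of the $(d-1)$-volumes of the Laguerre interfaces and is negative-definite on the orthogonal of the constant vector precisely when all cells are non-degenerate, i.e.\ when $m \notin \mD_N$.
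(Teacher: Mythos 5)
Your proof is correct, but it takes a genuinely different route from the paper. You go through the Kantorovich dual \eqref{eq:KantorovitchDiscrete}, write $d^2_\mS(m)$ as $\max_f \Phi(x,f)$, and apply the envelope theorem to differentiate the value function with respect to the sites $x_j$, using that the Laguerre cells at the optimum have mass $1/N$; you then rescale by $N$ to convert the Euclidean gradient in $(x_1,\dots,x_N)$ into the $L^2(X)$ gradient, and identify $T^{-1}(x_j)$ with the Laguerre cell. The paper instead stays entirely on the primal side: it observes that $\cF := d^2_\mS - \|\cdot\|^2 = \inf_{s\in\mS}[-2\langle m|s\rangle + \|s\|^2]$ is concave as an infimum of affine functionals, which immediately gives almost-everywhere differentiability on all of $\mM_N$ (by Alexandrov/Rademacher for concave functions) without any reference to $\mD_N$; it then exhibits an explicit supergradient $G(m)$, with $G(m)|_{P_j} = -2\bary(T^{-1}(x_j))$, by testing the suboptimal plan $(V_j \to x_j')$ for any competitor $m'$, and concludes $C^1$ regularity on $\mM_N\setminus\mD_N$ from the fact that a concave function with a continuous selection of supergradients is continuously differentiable. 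The advantage of the paper's argument is that it is self-contained and sidesteps precisely what you flag as your ``main technical obstacle'': the uniqueness and $C^1$ regularity of the dual potential $f^*(m)$, and the nondegeneracy of the Hessian of $\Phi$ in $f$ modulo constants, which are true but require justification (they depend on connectivity of the Laguerre graph, not merely on $m\notin\mD_N$). In exchange, your route makes the computational mechanism more visible — the gradient appears directly from differentiating the semi-discrete dual functional, which is exactly what the Newton solver manipulates — and explains the factor $1/N$ transparently. One small remark: your almost-everywhere claim is deduced from $C^1$ regularity on the full-measure set $\mM_N\setminus\mD_N$, which is fine, but the paper obtains it more cheaply as a one-line consequence of concavity, without first having to establish the explicit formula.
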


\begin{proof} The functional $\cF := d^2_{\mS}-\|\cdot\|^2$ is concave
  as an infimum of linear functions:
$$\cF(m) = d^2_{\mS}(m) - \|m\|^2
= \inf_{s\in \mS} \|m -s\|^2 - \|m\|^2 = \inf_{s\in \mS}
\left[-2\<m|s\> + \|s\|^2 \right],$$ where $\<\cdot, \cdot\>$ denotes
the $L^2(X)$ scalar product.  This implies in particular that $\cF$
and therefore $d^2_{\mS}$ is differentiable almost everywhere on
$\mM_N$. Given $m$ in $\mM_N\setminus \mD_N$, define $x_j = m(P_j)$
and let $T : X \to \mR^d$ be the optimal transport plan from $\Leb$ to
$m\#\Leb = \frac{1}{N} \sum_{j=1}^N \delta_{x_j}$.  The transport plan
is indeed always representable by a function when the source measure
is absolutely continuous with respect to the Lebesgue
measure. 
Let $V_j = T^{-1}(x_j)$ be the partition of $X$ induced by this
transport plan. Then
\begin{align*}
\cF(m) 
= W_2^2(m\#\Leb,\Leb) - \|m\|^2 &= \sum_{j=1}^N \int_{V_j} \|x_j - x\|^2 - \|x_j\|^2 d x \\
&= \<m|G(m)\> + \sum_{j=1}^N \int_{V_j} \|x\|^2 dx
\end{align*} 
where $G(m) \in \mM_N$ is the piecewise constant function on $X$ given
by $\left.G(m)\right|_{P_j} = -2 \bary(V_j)$. 
For any $m'$ in $\mM_N$ and $x'_j = m'(V_j)$, one has
\begin{align*}
\cF(m')
= W_2^2(m'\#\Leb,\Leb)-\|m'\|^2
 &\leq \sum_{j=1}^N \int_{V_j} \|x_j' - x\|^2  - \|x_j'\|^2d x  \\
&= \cF(m) + \<m'-m|G(m)\>
\end{align*}
This shows that $G(m)$ belongs to the superdifferential to $\cF$ at
$m$. In addition, by the continuity of optimal transport plans, the map $m\in \mM_N \setminus \mD_N \mapsto G(m)$ is
continuous.
To summarize, on the open domain $\mM_N\setminus \mD_N$
the concave function $\cF$ possesses a continuous selection of
supergradient. This implies that $\cF$ is of class $\cC^1$ on this
domain, with $\nabla F(m) = G$, and the result follows.
\end{proof}

\paragraph{Construction of the initial solution} Since the discrete
energy \eqref{eq:DiscreteEnergy} is non-convex, the construction of
the inital guess is important. We follow a time-refinement strategy
already used by Brenier \cite{Brenier:2008ho} to construct a good
initial guess. Assuming that we have already a local minimizer for
$T_{k} = 2^k+1$, we use linear interpolation to construct an initial
guess for $T_{k+1} = 2^{k+1} +1$. The optimization is then performed
from this inital guess, using a quasi-Newton algorithm for the energy
\eqref{eq:DiscreteEnergy}.

\paragraph{Choice of the penalization parameter}
The optimal choice of 
$\lambda$ in \eqref{eq:DiscreteEnergy} depends on the
quantization dimension $D = D_{\rm quant}(\mu)$ of the generalized solution $\mu\in \Prob(\Omega)$ that one expects to
recover: namely $\lambda_N = N^{-\frac 1 D}$, see the remark after \eqref{eq:ChoiceLambda}.
We call $D$ the flow dimension, and regard it as as the intrinsic dimensionality
of the problem which determines its computational difficulty. For a
classical solution, this dimension agrees with the ambient dimension
i.e. $D=d$, while for a non-deterministic solution the quantization
dimension can be up to $2d$. Intermediate dimensions $d< D < 2d$ are
also common \cite{Brenier:1985bz}. 
In our numerical experiments we set $\lambda_N = N^\frac 1 3$, a decision justified a-posteriori by the numerical estimation of the
quantization dimension of the computed solution, see
Figure~\ref{fig:Boxcount}.

Note that the numerical error in \eqref{eq:ChoiceLambda} is governed (for a fixed number $T$ of time steps) by the quantity $\lambda^{-1} + h_N^2 \lambda$, and that $h_N = \cO(N^{-\frac 1 {2d}})$ 
under the assumptions of Theorem \ref{th:UpperBound}. The choice $\lambda_N = N^{\frac 1 \alpha}$ thus yields a convergent scheme whenever $\alpha > d$, although convergence rates are improved if $\alpha$ is close to the flow dimension $D$, so that $\lambda_N \approx N^\frac 1 D \approx h_N^{-1}$.

\subsection{Visualization of generalized solution}
\label{subsec:Visualization}
The main interest of numerical experimentation is to visualize
generalized solutions to Euler's equation, or equivalently generalized
geodesics between two measure-preserving diffeomorphisms $s_*,s^*$ in
$\mS$.

\subsubsection{Gradient of the pressure}
\label{subsubsec:GradientPressure}
Consider a minimizer of the discretized energy
\eqref{eq:DiscreteEnergy}. Given $i \in \{ 1,\hdots, T-1\}$, $m_i$
minimizes over $\mM_N$ the functional $m \mapsto
T(\|m-m_{i-1}\|^2+\|m_{i+1}-m\|^2) + \lambda d_\cS^2(m)$.  This gives
\begin{equation}
\label{eq:Acceleration}
T^2(m_{i-1}-2 m_i+m_{i+1}) = T \lambda \nabla d_\cS^2(m_i).
\end{equation}
This equation is a discretized counterpart of the rule that the
acceleration of a geodesic on an embedded manifold, is normal to that
manifold (here $\mS$ plays the role of the manifold, embedded in
$\mM$, which is internally approximated by the linear space
$\mM_N$). The second order difference $T^2 (m_{i-1}-2 m_i+m_{i+1})$
approximates a second derivative in time. Comparing
~\eqref{eq:Acceleration} to \eqref{eq:PathDynamics}, we see that the
right hand-side of \eqref{eq:Acceleration} can be used as an
estimation of (minus) the pressure of the gradient.

\subsubsection{Geometric data analysis} As in the proof of
Theorem~\ref{th:UpperBound}, the discrete minimizer of \eqref{eq:DiscreteEnergy} can converted to a
collection of $N$ piecewise-linear curves
$\{\omega_1,\hdots,\omega_N\} = \Gamma_N$. We recall that the domain $X$ is
partitioned into $N$ subdomains $(P_j)_{1\leq j\leq N}$ with equal
area and we let $\omega_j(i/T) \in \mR^d$ be the point corresponding to the
restriction of $m_i$ to the subdomain
$P_j$, for each $0 \leq i \leq T$. Figure~\ref{fig:SolutionToCurves} illustrates this
construction. We regard $\Gamma_N$ as embedded in the Hilbert space $\mH := H^1([0,1], \R^2)$ which plays a natural role in the problem of interest, as in \S \ref{sec:Relaxed}, and apply techniques from the field of geometric
data analysis.

\label{subsubsec:GDA}
\paragraph{Clustering} In order to better visualize the solution, we use the $k$-means algorithm to divide the set $\Gamma_N$ into $k$. A distinct particle color is attached to each cluster, see for instance
Figure~\ref{fig:Square-Reconstructed-Clusters}.  The $k$-means
algorithm consists in finding a local minimizer of the optimal
quantization problem
\begin{equation}
	\label{eq:Box2}
\min_{\ell_1,\hdots\ell_k \in \mH} \frac{1}{N} \sum_{\omega\in \Gamma_N} \min_{1\leq i\leq k} 
\|\omega - \ell_i\|_\mH^2 	
\end{equation}
using a simple fixed point algorithm, and
to divide $\Gamma_N$ into clusters $(C_i)_{1\leq i\leq k}$ with
$$C_i = \left\{ \omega \in \Gamma_N; \|\omega - \ell_i\|_\mH = \arg\min_{1\leq j\leq k} \|\omega - \ell_i\|_\mH \right\}.$$
Note that $l_1, \cdots, l_k$ automatically belong to ${\rm Span}(\Gamma_N)$, hence to the $d(T+1)$-dimensional linear subspace of $\mH$ consisting of piecewise linear paths with nodes $\omega(t) \in \mR^d$ at times $t=i/T$, $0 \leq i \leq T$. This makes \eqref{eq:Box2} tractable.

\paragraph{Box dimension } 
A natural objective is to estimate the quantization dimension $D_{\rm
  quant}(\mu)$ of the generalized flow $\mu \in \Prob(\Omega)$
minimizing the relaxed problem \eqref{eqdef:MinimizeRelaxed}. The
probability measure $\mu_N$ equidistributed on the set $\Gamma_N$
approximates $\mu$, see Proposition \ref{prop:Construction}, hence we
can expect the set $\Gamma_N$ to also approximate $\supp(\mu)$.  
The quantization dimension $D_{\rm quant}(\mu)$ is difficult to
estimate, but by Proposition~\ref{prop:Dimension} it admits the
simpler upper bound $D_{\rm box}(\supp(\mu))$.  We estimate the latter
by applying the furthest point sampling algorithm to the finite
metric space $\Gamma_N$, which defines an ordering on the elements of
$\Gamma_N$ as follows: let $\gamma_1$ be an arbitrary point of
$\Gamma_N$ and define by induction
\begin{equation}
\gamma_{i+1} := \arg\max_{\gamma \in \Gamma_N} d(\gamma, \{\gamma_1,\hdots,\gamma_i\}) 
\label{eq:FNS}
\end{equation}
As in Definition~\ref{def:Dimension}, denote by $r_i = r_i(\Gamma_N)$
is the smallest $r\geq 0$ such that $\Gamma_N$ can be covered by $i$
balls of radius $r$.  For $1 \ll i \ll N$, the ratio
$\log(i)/\log(1/r_i(\Gamma_N))$ is expected to approximate $\log(i)/
\log(1/r_i(\supp(\mu)))$ and thus the desired $D_{\rm box}(\supp
\mu)$. 

\begin{lemma} Let $\ve_i := \max_{\gamma\in \Gamma} d(\gamma,
  \{\gamma_1,\hdots,\gamma_i\})$, where $\gamma_i$ is defined as in
  \eqref{eq:FNS}. Then,
$$
\left(1-\frac{\log(2)}{\log(1/\ve_i)}\right)
\frac{\log(i)}{\log(1/\ve_i)} 
\leq \frac{\log(i)}{\log(1/r_i)}
\leq \frac{\log(i)}{\log(1/\ve_i)}
$$
\end{lemma}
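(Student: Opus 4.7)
The plan is to reduce this to two ingredients: the trivial fact that furthest point sampling produces a valid $i$-point covering, and the classical Gonzalez $2$-approximation bound $\ve_i \leq 2 r_i$. Once we have $r_i \leq \ve_i \leq 2 r_i$, the inequalities follow from elementary manipulations of logarithms.

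First I would establish the right inequality. The set $\{\gamma_1,\hdots,\gamma_i\}$ produced by \eqref{eq:FNS} is by definition a covering of $\Gamma_N$ by $i$ closed balls of radius $\ve_i$, so $r_i \leq \ve_i$ by the definition of $r_i$ in Definition~\ref{def:Dimension}. Taking $\log(1/\cdot)$ reverses the inequality and dividing $\log(i) > 0$ yields $\log(i)/\log(1/r_i) \leq \log(i)/\log(1/\ve_i)$.

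Next I would prove $\ve_i \leq 2 r_i$ by a pigeonhole argument. Let $c_1, \hdots, c_i$ be centers of an optimal covering of $\Gamma_N$ by $i$ balls of radius $r_i$. Let $\gamma_{i+1} \in \Gamma_N$ be a point achieving $d(\gamma_{i+1}, \{\gamma_1,\hdots,\gamma_i\}) = \ve_i$. By the greedy construction \eqref{eq:FNS}, the sequence of residual radii $\ve_1 \geq \ve_2 \geq \hdots \geq \ve_i$ is non-increasing, so for every $1 \leq j < k \leq i+1$ we have $d(\gamma_j, \gamma_k) \geq \ve_{k-1} \geq \ve_i$. If $\ve_i > 2 r_i$, then by pigeonhole two of the $i+1$ points $\gamma_1, \hdots, \gamma_{i+1}$ must lie in a common ball $B(c_j, r_i)$, making their distance at most $2 r_i < \ve_i$, a contradiction. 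Hence $\ve_i \leq 2 r_i$, i.e.\ $\log(1/r_i) \leq \log(1/\ve_i) + \log 2$.

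Finally, the left inequality follows by dividing $\log(i)$ by this upper bound for $\log(1/r_i)$ and applying $1/(1+x) \geq 1-x$ (valid for $x \geq 0$, since $(1-x)(1+x) = 1-x^2 \leq 1$) with $x = \log(2)/\log(1/\ve_i)$:
\[
\frac{\log(i)}{\log(1/r_i)} \geq \frac{\log(i)}{\log(1/\ve_i) + \log(2)} = \frac{\log(i)}{\log(1/\ve_i)} \cdot \frac{1}{1+\log(2)/\log(1/\ve_i)} \geq \left(1 - \frac{\log(2)}{\log(1/\ve_i)}\right)\frac{\log(i)}{\log(1/\ve_i)}.
\]
No step looks genuinely hard; the only non-cosmetic point is the $2$-approximation argument, which is standard but worth stating carefully so that the monotonicity $\ve_{k-1} \geq \ve_i$ for $k \leq i+1$ and the pigeonhole step are made explicit.
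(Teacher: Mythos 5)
Your proof is correct and follows the same approach as the paper: establish $r_i \le \ve_i \le 2r_i$ and then translate this into the logarithmic inequalities. In fact your pigeonhole argument is a bit more careful than the paper's one-line claim ("the balls centered at $\gamma_1,\hdots,\gamma_i$ of radius $\ve_i/2$ are disjoint, so $r_i \ge \ve_i/2$"): disjointness of those $i$ balls alone does not force every $i$-ball cover to have radius $\ge \ve_i/2$; one really needs $i+1$ pairwise $\ve_i$-separated points, which you supply by adjoining $\gamma_{i+1}$ (or any point realizing $\ve_i$), exactly as the argument requires.
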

\begin{proof}
  By construction, $r_i \leq \ve_i$. Moreover, the balls
  centered at the points $\gamma_1,\hdots,\gamma_i$ and with radius $\ve_i/2$
  are disjoint, so that $r_i \geq \ve_i/2$.
\end{proof}

\subsection{Test cases and numerical results} 

Our two testcases are constructed from two stationary solutions to
Euler's equation in $2$D. Let $s:\mR_+ \to \mS$ be a classical solution to Euler equation in Lagrangian coordinates \eqref{eq:PathDynamics}, starting from the identity map. We 
solve the discretized version \eqref{eqdef:DiscreteOptimization} of the minimization problem
\eqref{eqdef:MinimizeSDiff}-\eqref{eqdef:MinimizeRelaxed}, with $s_* = s(0)= \Id$ and $s^* := s(t_{\mathrm{max}})$, where $t_{\mathrm{max}} > 0$.
For small values of $t_{\mathrm{max}}$ the solution to this boundary problem is simply the original classical flow $s$, but for larger values a completely different generalized flow is obtained. 
In this case the geodesic $s$ in the space of the measure preserving diffeomorphisms is no longer the unique shortest path between its boundary values $s_*$ and $s^*$.
The first classical behavior is guaranteed if the pressure hessian satisfies 
\begin{equation}
\label{eq:PressureClassicalRescaled}
\nabla^2 p \prec (\pi/t_{\mathrm{max}})^2 \Id	
\end{equation}
uniformly on $[0,t_{\rm max}] \times X$, see \eqref{eq:PressureClassical} and \cite{Brenier:1985bz}. 
In all the numerical experiments, the number of points is set to
$N=10\,000$ and the number of timesteps is $T=2^4+1=17$.

\subsubsection{Rotation of the disk}
On the unit disk $D = \{(x_1,x_2) \in \R^2; \, x_1^2 + x_2^2 \leq 1 \}$,
the simplest stationary solution to Euler's
equation~\eqref{eqdef:Euler} is given by a time-independent pressure
field and speed:
\begin{align*}
p(x_1,x_2) &= \frac{1}{2}(x_1^2 + x_2^2), &
v(x_1,x_2) &= (-x_2,x_1).
\end{align*}
The corresponding Lagrangian flow $s(t)$ is simply the rotation of angle $t$.
The largest eigenvalue of $\nabla^2 p$ is $1$ at every point in
$D$. Hence by \eqref{eq:PressureClassicalRescaled} the flow of rotations
is the unique minimizer to both the variational formulation
\eqref{eqdef:MinimizeSDiff} and its relaxation \eqref{eqdef:MinimizeRelaxed} with boundary values $s_*=s(0)=\Id$ and $s^*=s(t_{\rm max})$, when $t_{\rm max} < \pi$. Uniqueness is lost at the critical time $t_{\rm max}=\pi$ which corresponds to a rotation of  angle $\pi$, so that
the final diffeomorphism becomes $s_* = s(\pi) = -\Id$. In this situation, the minimization problem \eqref{eqdef:MinimizeSDiff} has
two classical solutions, namely the clockwise and counterclockwise
rotations. The relaxation \eqref{eqdef:MinimizeRelaxed} has uncountably many generalized solutions such as, by linearity, superpositions of these two rotations.

Another explicit example of generalized solution was discovered by 
Brenier \cite{Brenier:1985bz}: given a point $x\in D$ and a speed $v$, denote by
$\omega_{x,v}$ the curve $\omega_{x,v}(t) = x\cos(t) +
v\sin(t)$, $t \in [0,1]$. Then, Brenier's solution is obtained as the pushforward by
the map $(x,v) \mapsto \omega_{x,v} \in \Omega$ of the measure on
$D\times \R^2$ defined by
$$ \mu(dx, dv) = \frac 1 \pi \cH^2(dx) \otimes \frac{1}{2\pi\sqrt{1-\vert x\vert^2}}
\left. \cH^1\right\vert_{\{|v| = \sqrt{1- |x|^2}\}}(dv),$$ where
$\cH^k$ denotes the $k$-dimensional Hausdorff measure.  In particular,
the quantization dimension of the solution is $3=2+1$. We refer to \cite{Bernot:2009ed} for more examples of optimal flows, and construct four dimensional one. Let $\mu_r$ be defined by combining (i) a classical rotation on the annulus $D \sm D(r)$, with $D(r) = \{x \in \mR^2; \, |x| \leq r\}$ and (ii) Brenier's solution rescaled by a factor $r$ on the disc $D(r)$. Then $\mu_r$ is an optimal generalized flow of quantization dimension $3$, whereas the averaged flow $\int_0^1 \mu_r dr$ is also optimal by linearity, and has quantization dimension $4$.

\paragraph{Numerical results} The numerical solutions computed by our
algorithm  for the critical time $t_{\rm max}=\pi$ are highly non-deterministic. To see this, we select a small
neighborhood around several points in the unit disk $D$ and look at
the trajectories emanating from this small neighborhood. As shown in
Figure~\ref{fig:Reconstructed-Trajectories}, we can see that the
trajectories emanating from each neighborhood fill up the disk. 
In addition, each indivual trajectory looks like an
ellipse. Second, we estimate the box dimension of the support of the
numerical solution (as explained in \S\ref{subsec:Visualization}). The
estimated dimension is slightly above $3$. 

\subsubsection{Beltrami flow on the square} On the unit square $S =
[-1/2,1/2]^2$, we consider the Beltrami flow constructed from the
time-independent pressure and speed:
\begin{align*}
p(x_1,x_2) &= \frac{1}{2}(\sin(\pi x_1)^2 + \sin(\pi x_2)^2)\\
v(x_1,x_2) &= (-\cos(\pi x_1) \sin(\pi x_2), \sin(\pi x_1) \cos(\pi x_2)) 
\end{align*}
The maximum eigenvalue of $\nabla^2 p$ is $\pi^2$, and
\cite{Brenier:1985bz} implies that the associated flow is minimizing
between $s_*=s(0)=\Id$ and $s^*=s(t_{\rm max})$ for $t_{\rm max} \leq
1$. Because of the lack of symmetry, generalized solutions constructed
from this flow are less understood than in the disk case.

\paragraph{Numerical results} Our numerical results suggest the
following observations. First, as shown in
Figure~\ref{fig:Square-Reconstructed}, the computed solutions with boundary values $s_*=\Id$ and $s^*=s(t_{\rm max})$
approximate the classical flow if $t_{\rm max} < 1$, and are non-deterministic generalized flows if $t_{\mathrm{max}}\geq 1$. This suggests the
sharpness of the bound given by \cite{Brenier:1985bz}. Interestingly, even for $t>1$, the numerical solutions
seem to remain deterministic in a neighborhood of the boundary of the
cube. This can be seen more clearly in
Figure~\ref{fig:Square-Reconstructed-Clusters}, where the particles
have been divided into clusters using the $k$-means algorithm (see
\S\ref{subsubsec:GDA}).

The pressure gradient is estimated as in
\S\ref{subsubsec:GradientPressure} and is displayed in
Figure~\ref{fig:Square-Reconstructed-Pressure}. These pictures seem
to indicate a loss of regularity of the pressure near the initial and
final times. This corroborates the result of \cite{Ambrosio:2007jk}
according to which the pressure belongs to $L^2_{\rm loc}(\,]0,T[,\,
\BV(X))$.

Figure~\ref{fig:Reconstructed-Trajectories} suggests that the even for
$t_{\mathrm{max}}=1.5$, the reconstructed solution for the Beltrami
flow are more deterministic than the solution to the disk inversion.
We estimate the box dimension of the support of the solution using the
method explained in \S\ref{subsubsec:GDA}. The result are displayed in
Figure~\ref{fig:Boxcount}. The estimated dimension is $D=2$ for the
deterministic solution ($t_{\mathrm{max}} = 0.9$) but it increases as the
maximum time (and therefore the amount of non-determinism)
increases. Finally, we note that the estimated dimensions for
$t_{\mathrm{max}} \in \{1.1, 1.3, 1.5\}$ seem to be strictly between $2$
and $3$, suggesting a fractal structure for the support of the
solution. This would need to be confirmed by a mathematical study.

\paragraph{Software.} The software developed for generating the
results presented in this article is publicly available at
\url{https://github.com/mrgt/EulerSemidiscrete}

\paragraph{Acknowledgement}
The authors thank Y. Brenier for constructive discussions and introducing them to the topic of Euler equations of inviscid incompressible fluids.

\begin{figure}
\centering
\resizebox{\textwidth}{!} 
{
\setlength\tabcolsep{2 pt}
\begin{tabular}{@{}ccccc@{}}
  \subfloat[$t=0.0$]
  {\includegraphics[width=.2\textwidth]{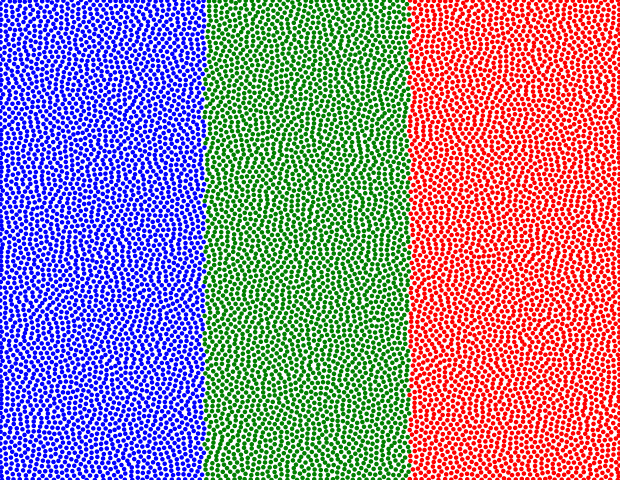}}&
  \subfloat[$t=0.95$]
  {\includegraphics[width=.2\textwidth]{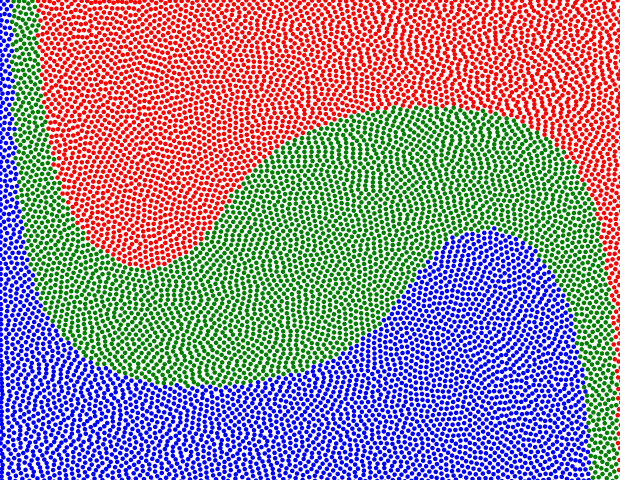}}&
  \subfloat[$t=1.1$]
  {\includegraphics[width=.2\textwidth]{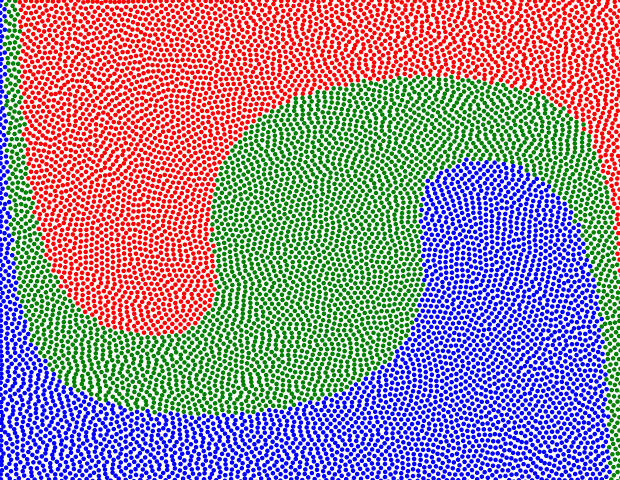}}&
  \subfloat[$t=1.3$]
  {\includegraphics[width=.2\textwidth]{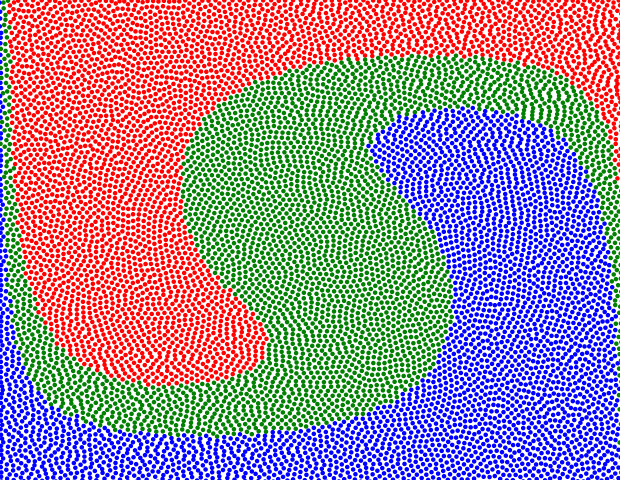}}&
  \subfloat[$t=1.5$]
  {\includegraphics[width=.2\textwidth]{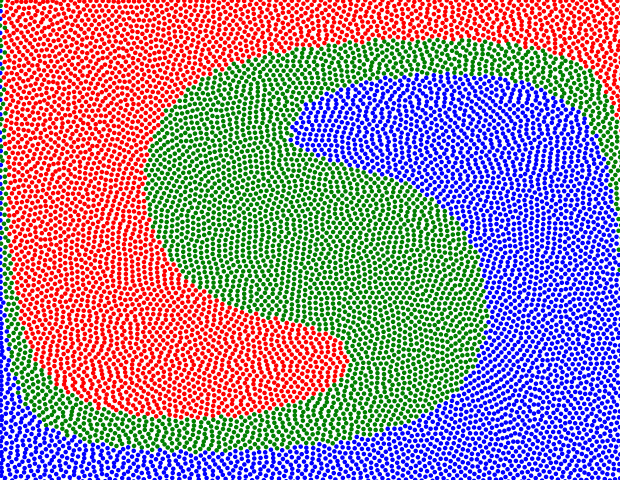}}\\
\hline
  \subfloat[$t=0.0$]
  {\includegraphics[width=.2\textwidth]{Results/Square-Reconstructed-Tmax=0.9/00.png}}&
  \subfloat[$t=0.25*t_{\mathrm{max}}$] 
  {\includegraphics[width=.2\textwidth]{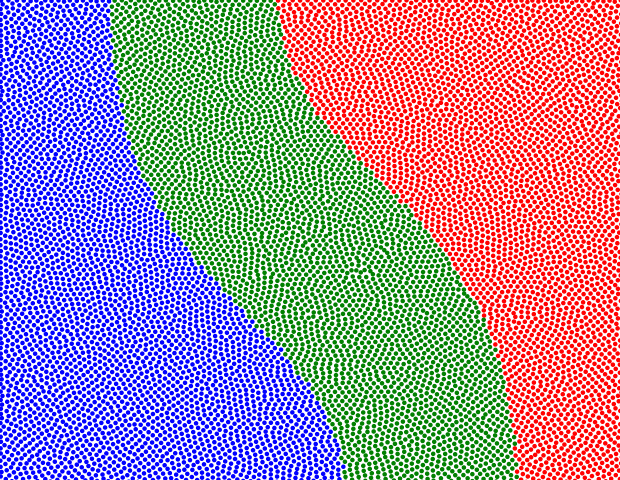}}&
  \subfloat[$t=0.5*t_{\mathrm{max}}$]
  {\includegraphics[width=.2\textwidth]{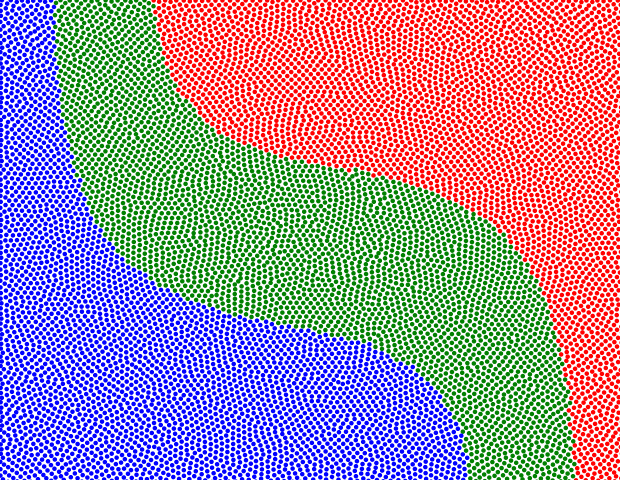}}&
  \subfloat[$t=0.75*t_{\mathrm{max}}$]
  {\includegraphics[width=.2\textwidth]{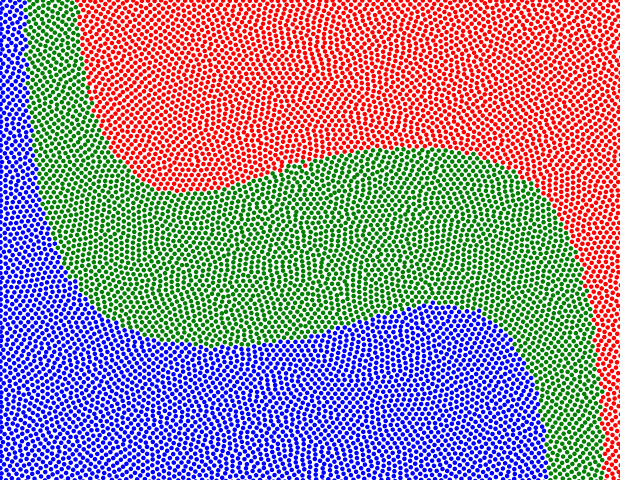}}&
  \subfloat[$t=t_{\mathrm{max}}=0.9$]
  {\includegraphics[width=.2\textwidth]{Results/Square-Reconstructed-Tmax=0.9/16.png}}\\
  \subfloat[$t=0.0$]
  {\includegraphics[width=.2\textwidth]{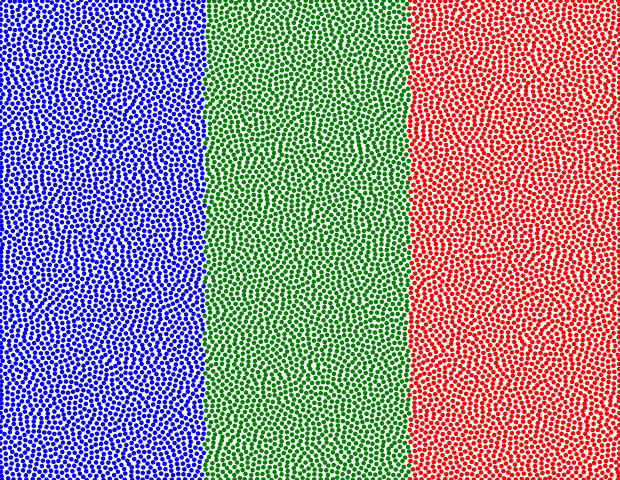}}&
  \subfloat[$t=0.25*t_{\mathrm{max}}$] 
  {\includegraphics[width=.2\textwidth]{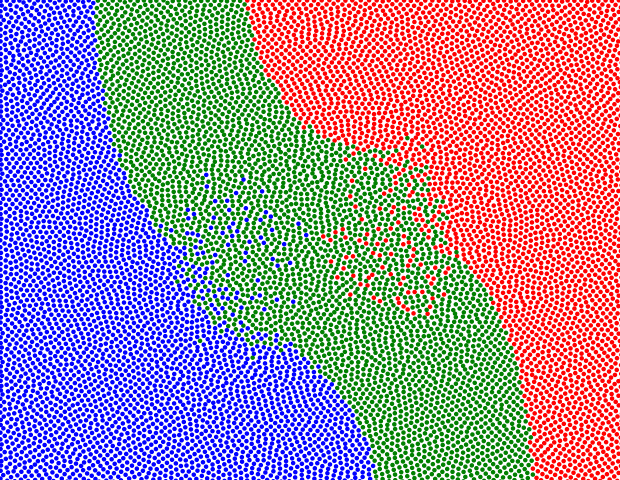}}&
  \subfloat[$t=0.5*t_{\mathrm{max}}$]
  {\includegraphics[width=.2\textwidth]{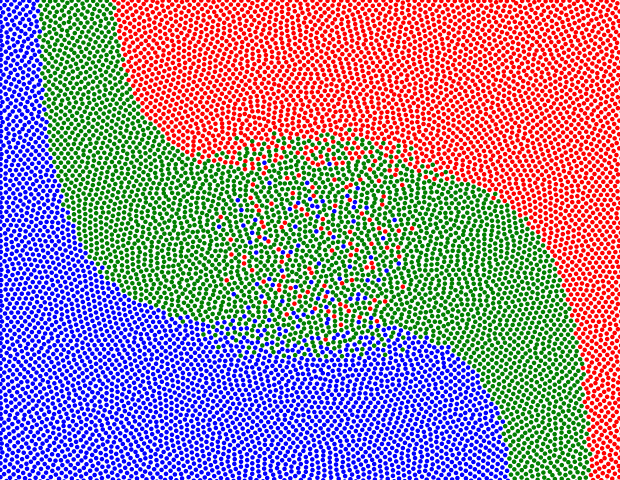}}&
  \subfloat[$t=0.75*t_{\mathrm{max}}$]
  {\includegraphics[width=.2\textwidth]{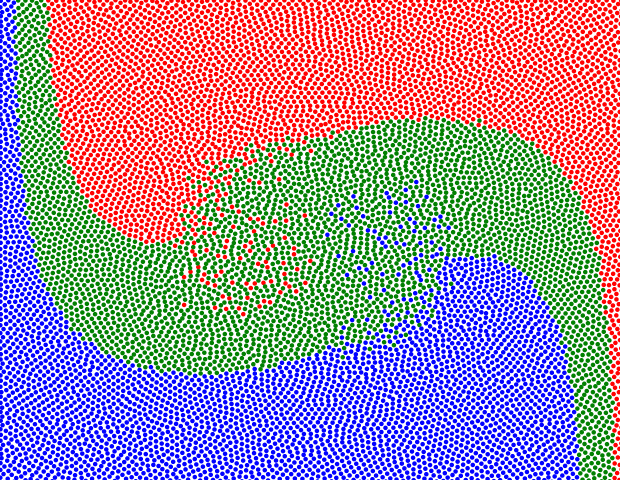}}&
  \subfloat[$t=t_{\mathrm{max}}=1.1$]
  {\includegraphics[width=.2\textwidth]{Results/Square-Reconstructed-Tmax=1.1/16.png}}\\
  \subfloat[$t=0.0$]
  {\includegraphics[width=.2\textwidth]{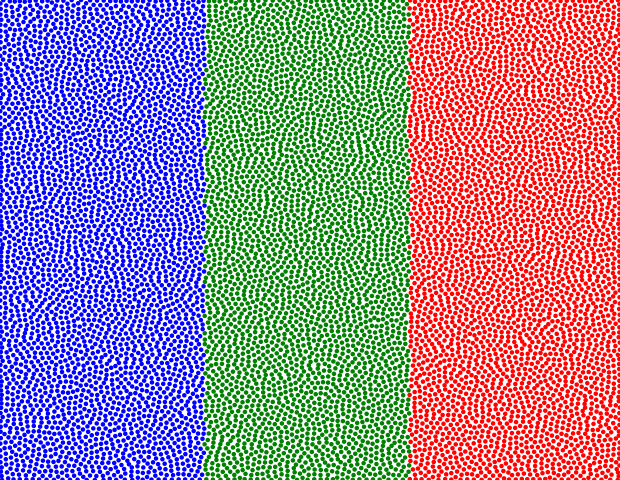}}&
  \subfloat[$t=0.25*t_{\mathrm{max}}$] 
  {\includegraphics[width=.2\textwidth]{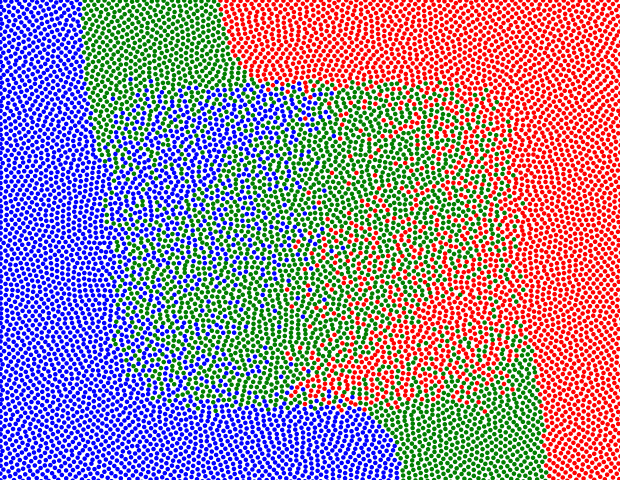}}&
  \subfloat[$t=0.5*t_{\mathrm{max}}$]
  {\includegraphics[width=.2\textwidth]{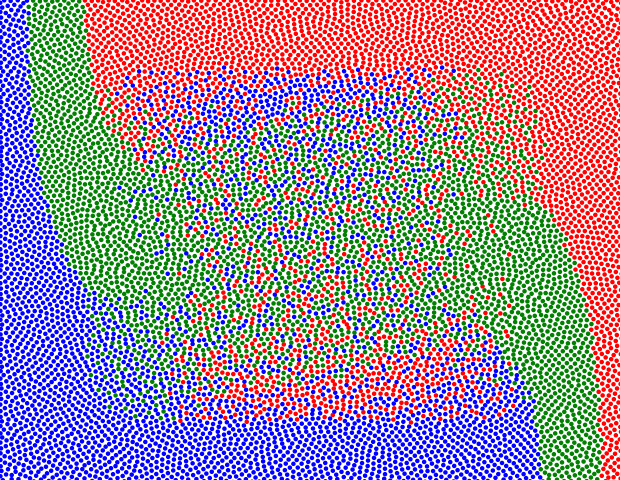}}&
  \subfloat[$t=0.75*t_{\mathrm{max}}$]
  {\includegraphics[width=.2\textwidth]{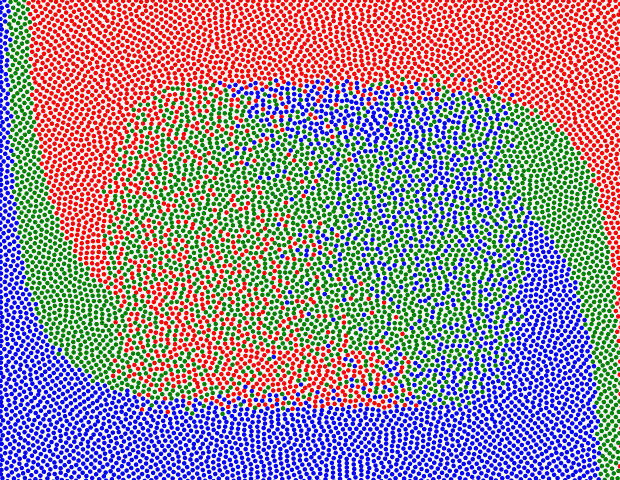}}&
  \subfloat[$t=t_{\mathrm{max}}=1.3$]
  {\includegraphics[width=.2\textwidth]{Results/Square-Reconstructed-Tmax=1.3/16.png}}\\
  \subfloat[$t=0.0$]
  {\includegraphics[width=.2\textwidth]{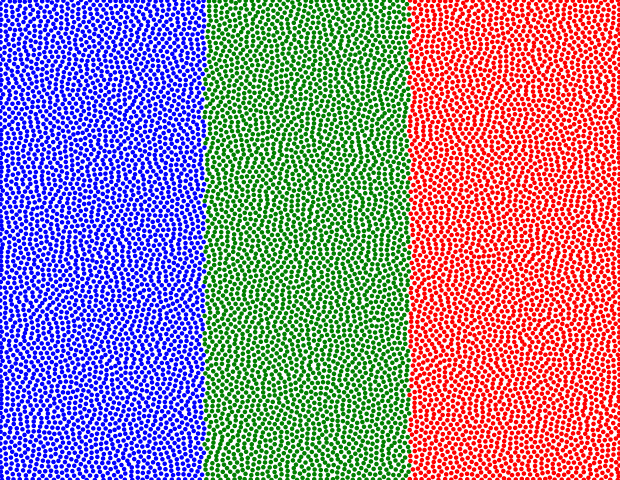}}&
  \subfloat[$t=0.25*t_{\mathrm{max}}$] 
  {\includegraphics[width=.2\textwidth]{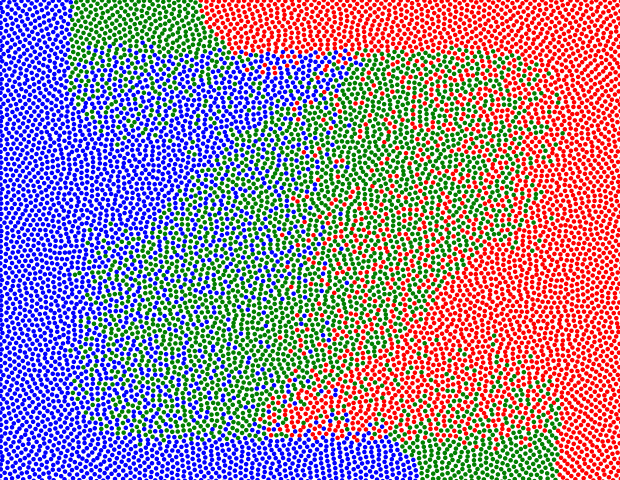}}&
  \subfloat[$t=0.5*t_{\mathrm{max}}$]
  {\includegraphics[width=.2\textwidth]{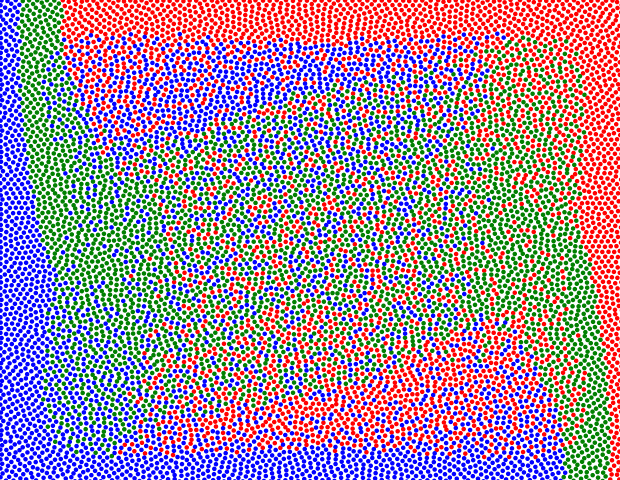}}&
  \subfloat[$t=0.75*t_{\mathrm{max}}$]
  {\includegraphics[width=.2\textwidth]{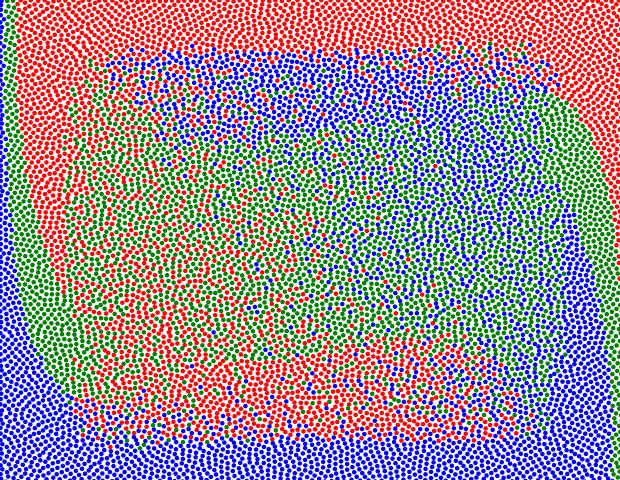}}&
  \subfloat[$t=t_{\mathrm{max}}=1.5$]
  {\includegraphics[width=.2\textwidth]{Results/Square-Reconstructed-Tmax=1.5/16.png}}
\end{tabular}}
\caption{\emph{(First row)} Beltrami flow in the unit square at
  various timesteps, a classical solution to Euler's equation. The
  color of the particles depend on their initial
  position. \emph{(Second to fifth row)} Generalized fluid flows that
  are reconstructed by our algorithm, using boundary conditions
  displayed in the first and last column. When $t_{\mathrm{max}} < 1$
  we recover the classical flow, while for $t_{\mathrm{max}} \geq 1$ the
  solution is not classical any more and includes some mixing. \label{fig:Square-Reconstructed}}
\end{figure}

\begin{figure}
\centering
\resizebox{\textwidth}{!} 
{
\setlength\tabcolsep{2 pt}
\begin{tabular}{@{}ccccc@{}}
  \subfloat[$t=0.0$]
  {\includegraphics[width=.2\textwidth]{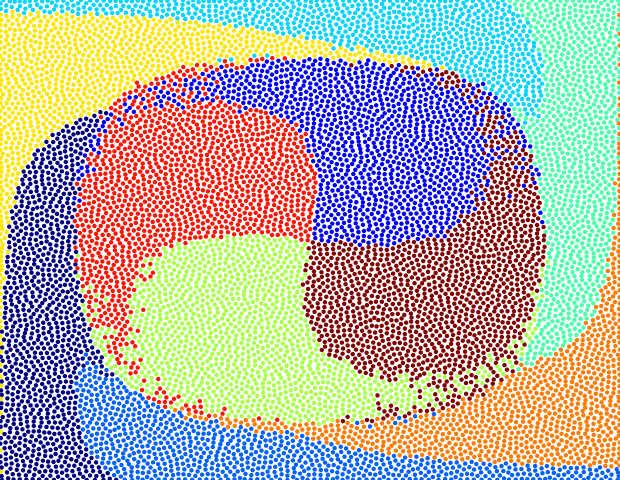}}&
  \subfloat[$t=0.125t_{\mathrm{max}}$]
  {\includegraphics[width=.2\textwidth]{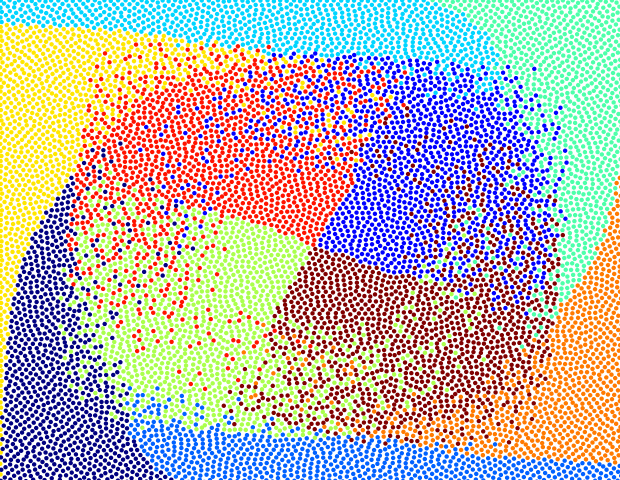}}&
  \subfloat[$t=0.25t_{\mathrm{max}}$]
  {\includegraphics[width=.2\textwidth]{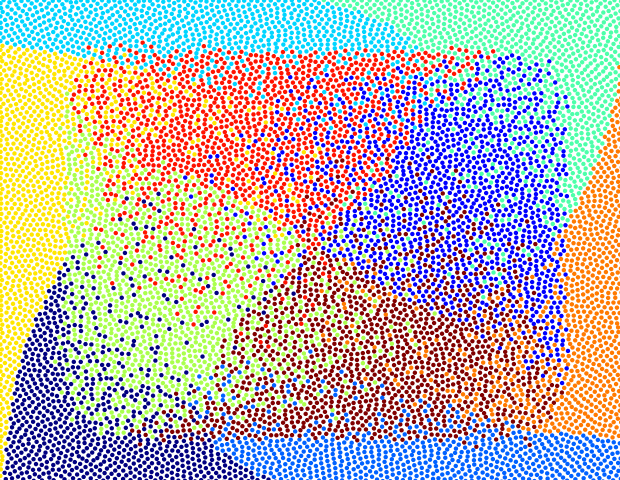}}&
  \subfloat[$t=0.375t_{\mathrm{max}}$]
  {\includegraphics[width=.2\textwidth]{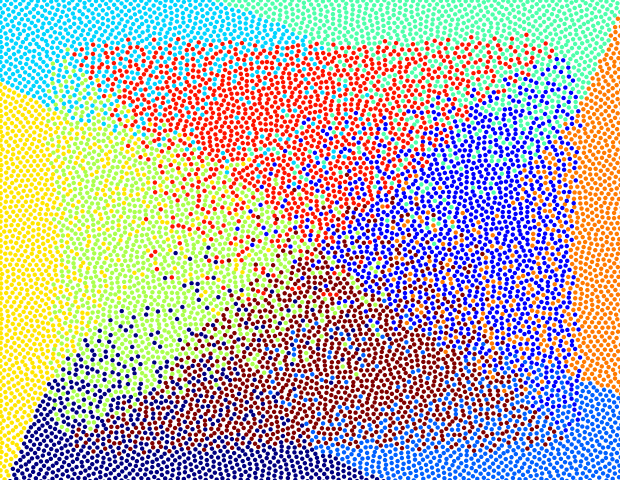}}&
  \subfloat[$t=0.5t_{\mathrm{max}}$]
  {\includegraphics[width=.2\textwidth]{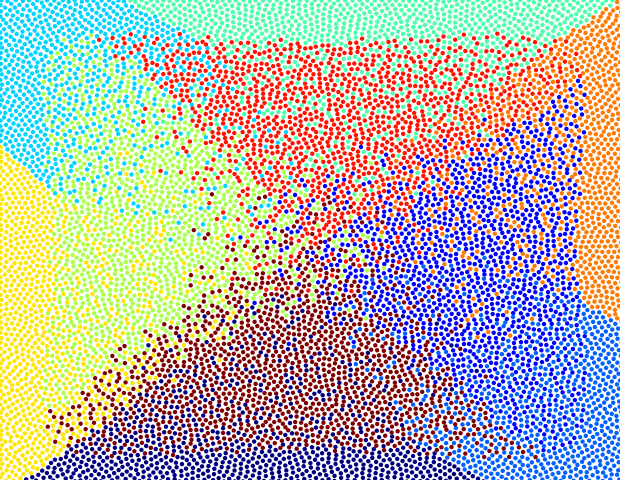}}\\
  \subfloat[$t=0.625t_{\mathrm{max}}$]
  {\includegraphics[width=.2\textwidth]{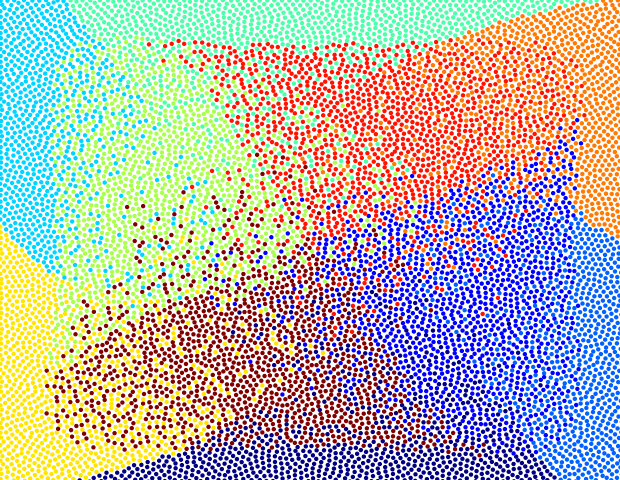}}&
  \subfloat[$t=0.75t_{\mathrm{max}}$]
  {\includegraphics[width=.2\textwidth]{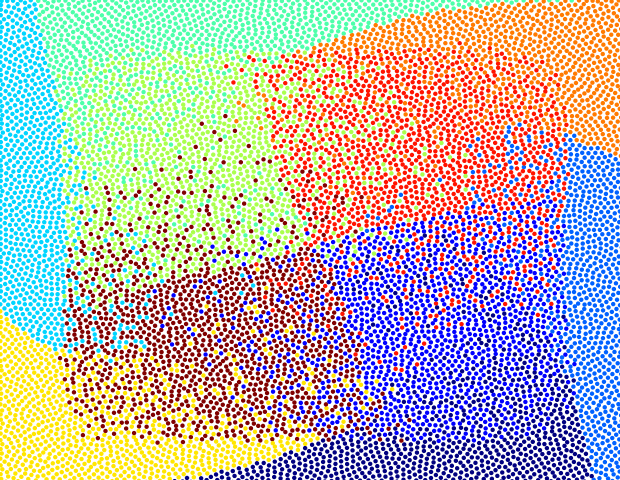}}&
  \subfloat[$t=0.875t_{\mathrm{max}}$]
  {\includegraphics[width=.2\textwidth]{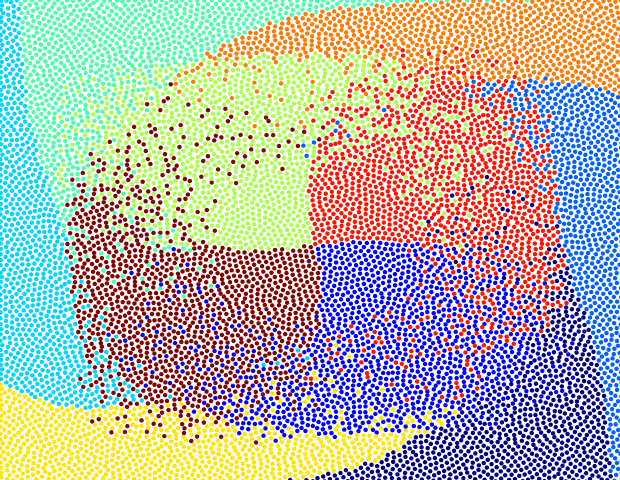}}&
  \subfloat[$t=t_{\mathrm{max}}=1.5$]
  {\includegraphics[width=.2\textwidth]{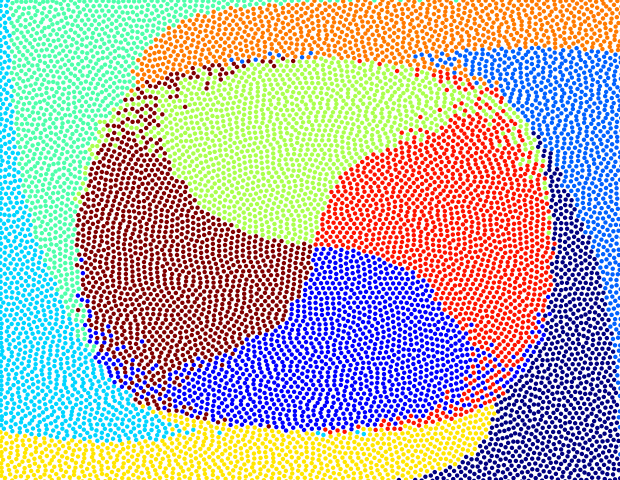}}&
\end{tabular}}
\caption{Using the $k$-means cluster algorithm, we cluster the
  reconstructed trajectories for the Beltrami flow in the square with
  $t_{\mathrm{max}} = 1.5$ into $10$ groups. This suggests that close
  to the boundary of the square the movement of particle is clockwise
  and deterministic while in the interior the movement is highly
  non-deterministic and counter-clockwise. \label{fig:Square-Reconstructed-Clusters}}
\end{figure}


\begin{figure}
\centering
\resizebox{.9\textwidth}{!} 
{
\setlength\tabcolsep{2 pt}
\begin{tabular}{@{}ccc@{}}
  \subfloat[$t=0.0$]
  {\includegraphics[width=.33\textwidth]{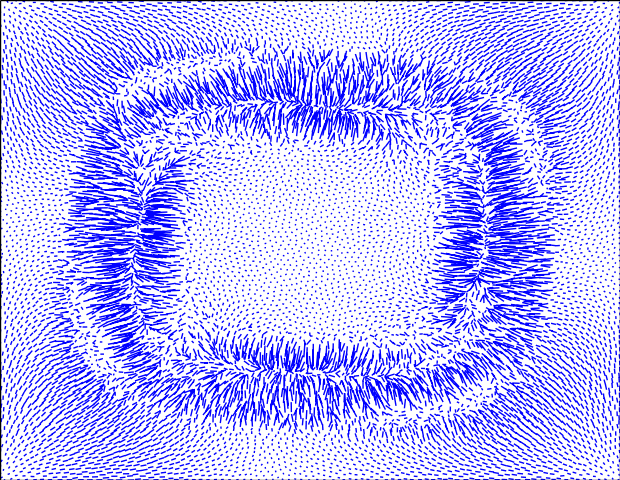}}&
  \subfloat[$t=0.125t_{\mathrm{max}}$]
  {\includegraphics[width=.33\textwidth]{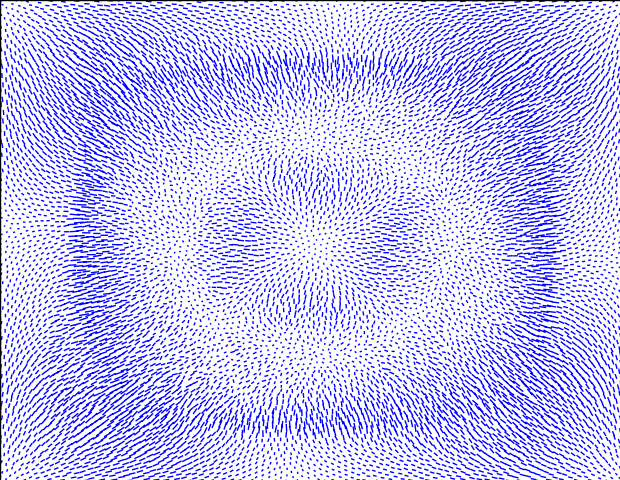}}&
  \subfloat[$t=0.25t_{\mathrm{max}}$]
  {\includegraphics[width=.33\textwidth]{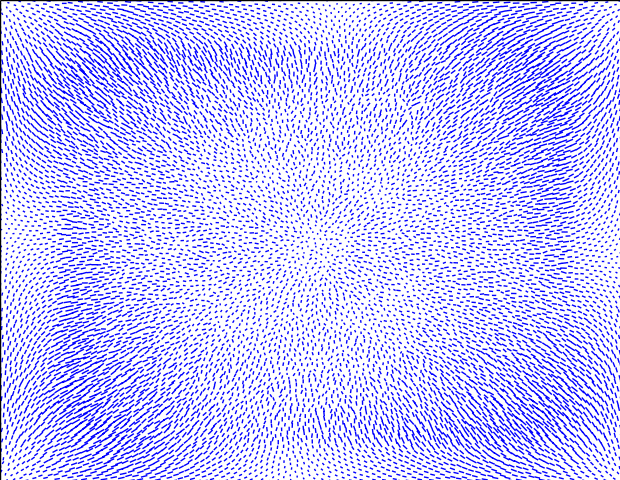}}\\
  \subfloat[$t=0.375t_{\mathrm{max}}$]
  {\includegraphics[width=.33\textwidth]{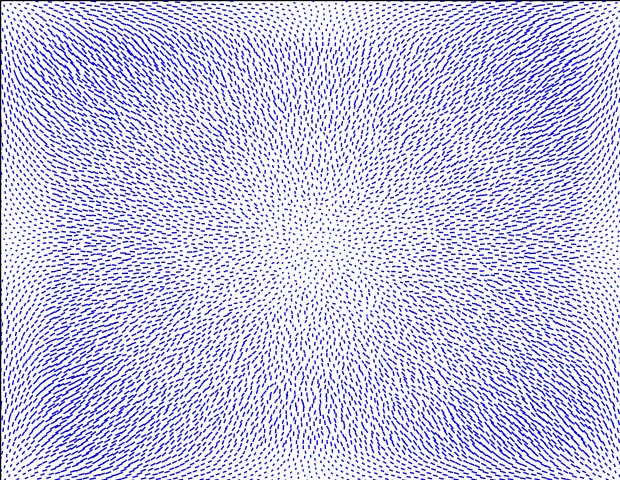}}&
  \subfloat[$t=0.5t_{\mathrm{max}}$]
  {\includegraphics[width=.33\textwidth]{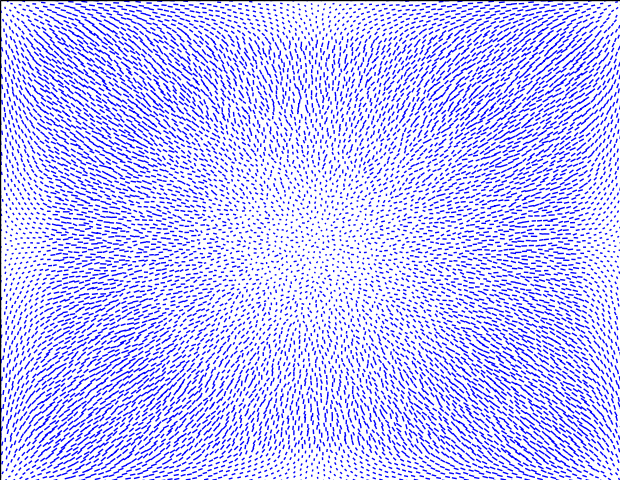}}&
  \subfloat[$t=0.625t_{\mathrm{max}}$]
  {\includegraphics[width=.33\textwidth]{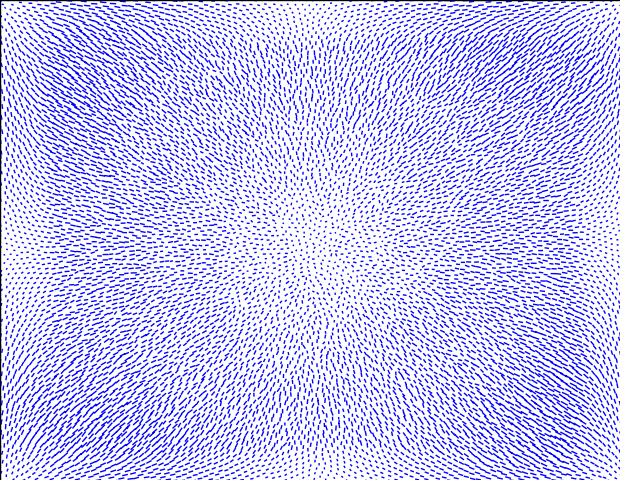}}\\
  \subfloat[$t=0.75t_{\mathrm{max}}$]
  {\includegraphics[width=.33\textwidth]{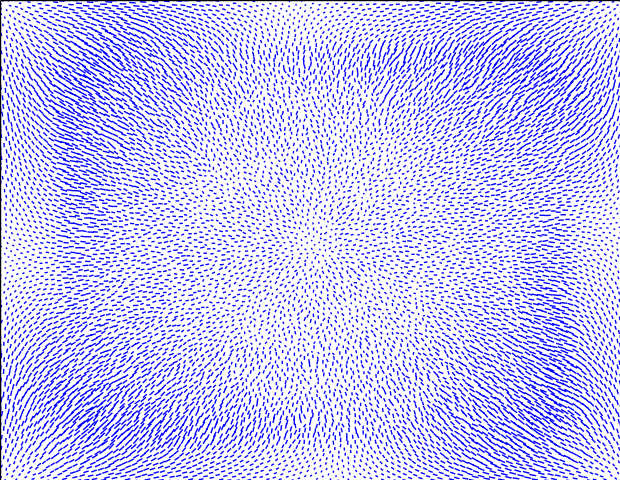}}&
  \subfloat[$t=0.875t_{\mathrm{max}}$]
  {\includegraphics[width=.33\textwidth]{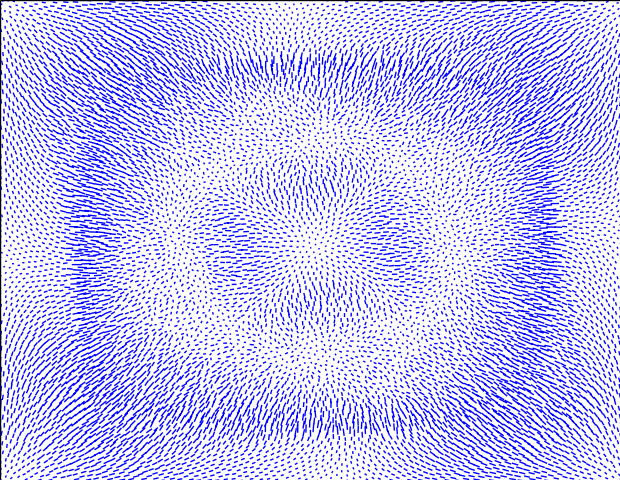}}&
  \subfloat[$t=t_{\mathrm{max}}=1.5$]
  {\includegraphics[width=.33\textwidth]{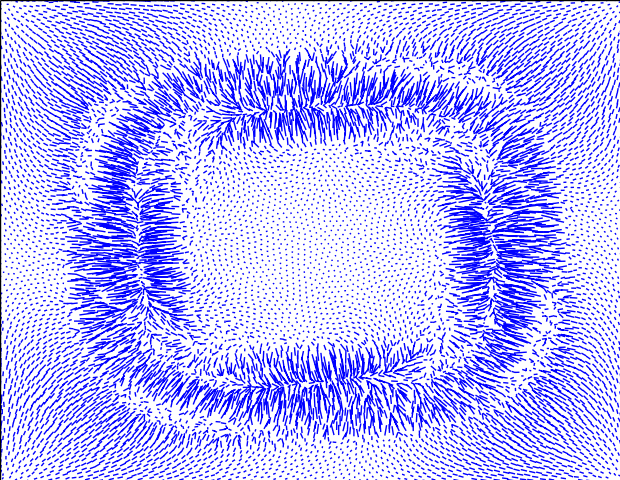}}
\end{tabular}}
\caption{Estimated pressure gradient for the Beltrami flow on the square with $t_{\mathrm{max}} = 1.5$.
\label{fig:Square-Reconstructed-Pressure}}
\end{figure}


\begin{figure}
\centering
\resizebox{\textwidth}{!} 
{
\setlength\tabcolsep{2 pt}
\begin{tabular}{@{}ccc@{}}
  \subfloat[$(x,y) = (-0.7,0)$]
  {\includegraphics[width=.33\textwidth]{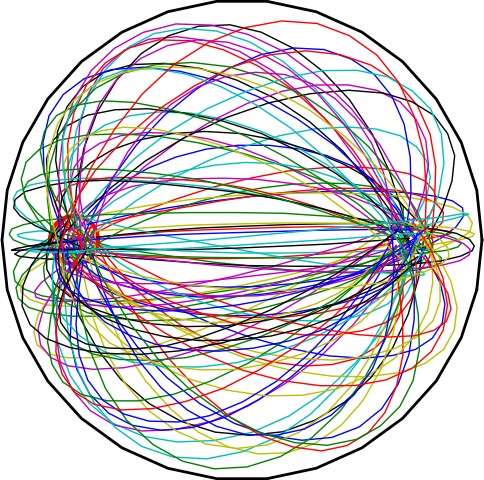}}&
  \subfloat[$(x,y) = (-0.35,0)$]
  {\includegraphics[width=.33\textwidth]{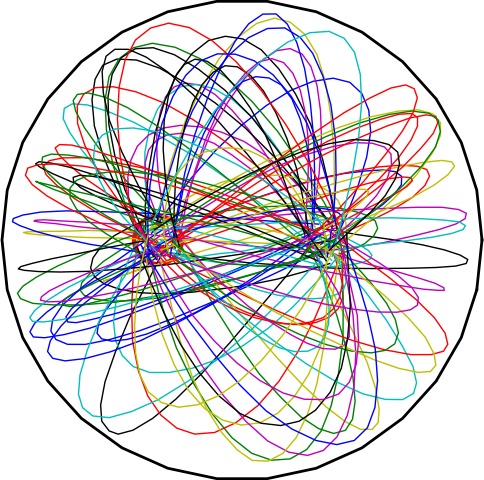}}&
  \subfloat[$(x,y) = (0,0)$]
  {\includegraphics[width=.33\textwidth]{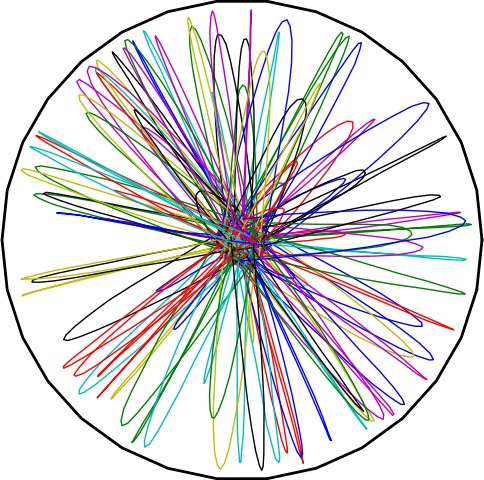}}\\
  \subfloat[$(x,y) = (0.2,0)$]
  {\includegraphics[width=.33\textwidth]{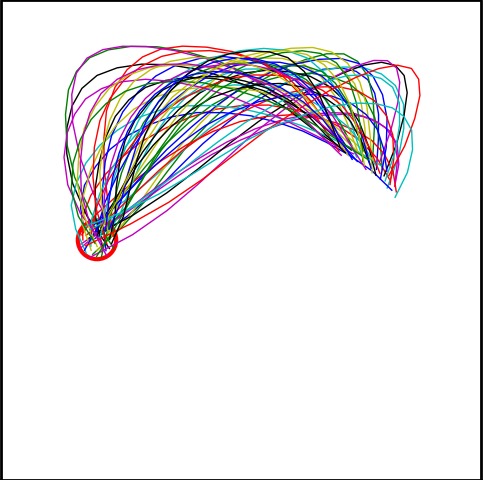}}&
  \subfloat[$(x,y) = (0.35,0)$]
  {\includegraphics[width=.33\textwidth]{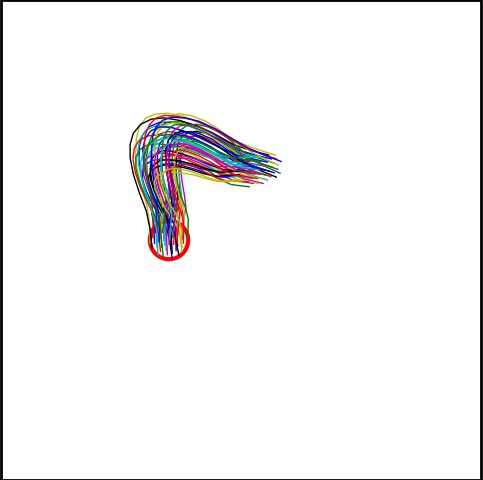}}&
  \subfloat[$(x,y) = (0.5,0)$]
  {\includegraphics[width=.33\textwidth]{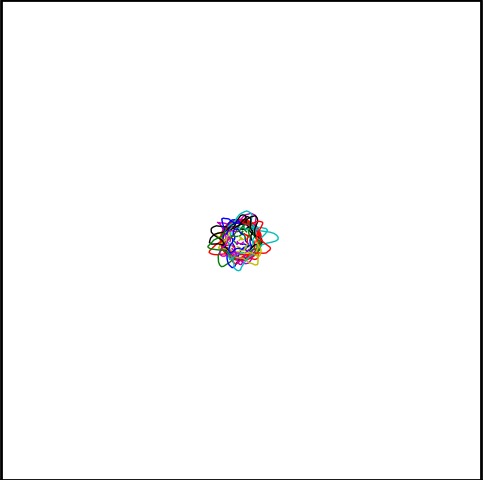}}
\end{tabular}}
\caption{We select particles whose initial position lie in a small
  disk, and display their trajectories according to the computed
  solution to \eqref{eq:DiscreteEnergy}. (Top) For the inversion of
  the unit disk (Bottom) For the Beltrami flow on the square, with
  $t_{\mathrm{max}} = 1.5$. \label{fig:Reconstructed-Trajectories}}
\end{figure}

\begin{figure}
\centering
\resizebox{\textwidth}{!} 
{
\includegraphics[width=.5\textwidth]{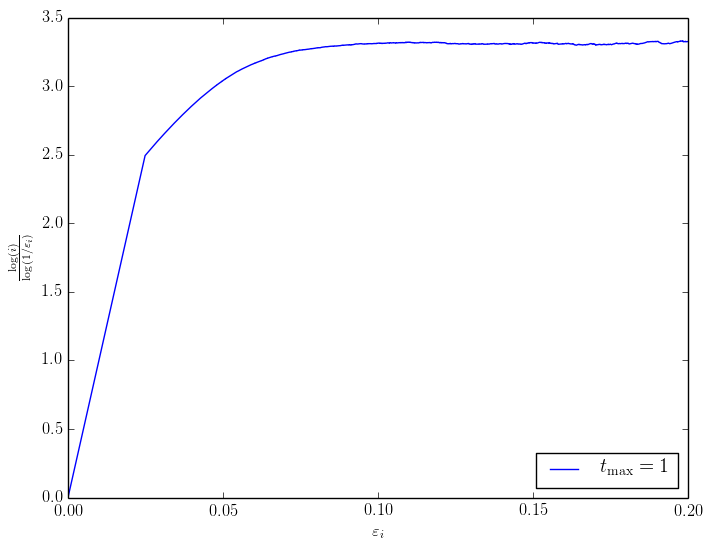}
\includegraphics[width=.5\textwidth]{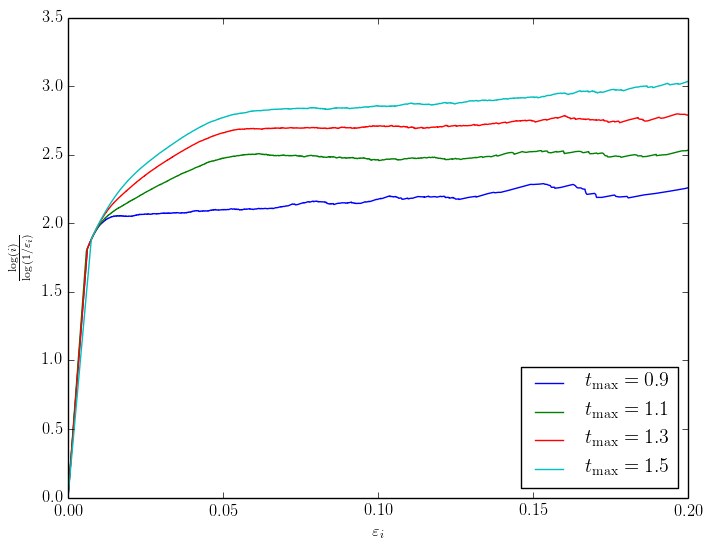}
}
\caption{Estimation of the box counting dimension of the support of
  the computed solution, see \S\ref{subsubsec:GDA}. \emph{(Left)} for
  the inversion of the unit disk \emph{(Right)} Comparison between the
  estimated box counting dimensions of the solutions to the Beltrami
  flow on the square, depending on the maximum time. \label{fig:Boxcount}}
\end{figure}


\bibliographystyle{alpha}
\bibliography{Include/AllPapers,Include/CGAL}


\end{document}